\@ifdefinable\equationname{\let\equationname\equationautorefname}
\def\equationautorefname~#1\@empty\@empty\null{(#1\@empty\@empty\null)}
\@ifdefinable\AMSname{\let\AMSname\AMSautorefname}
\def\AMSautorefname~#1\@empty\@empty\null{(#1\@empty\@empty\null)}
\@ifdefinable\itemname{\let\itemname\itemautorefname}
\def\itemautorefname~#1\@empty\@empty\null{(#1\@empty\@empty\null)
}
\renewcommand{\theenumi}{\alph{enumi}}
\renewcommand{\theenumii}{\roman{enumii}}
\renewcommand{\p@enumii}{\theenumi$\m@th\vert$}
\renewcommand{\p@enumiii}{\theenumi.\theenumii.}
\renewcommand{\labelitemi}{$\m@th\circ$}
\renewcommand{\labelitemii}{$\m@th\diamond$}
\renewcommand{\labelitemiii}{$\m@th\star$}
\renewcommand{\labelitemiv}{$\m@th\cdot$}
\newcommand{\basetheorem}[3]{
    \newtheorem{#1}{#2}[#3]
    \newtheorem*{#1*}{#2}
    \expandafter\def\csname #1autorefname\endcsname{#2}
}
\newcommand{\maketheorem}[3]{
    \newaliascnt{#1}{#3}
    \newtheorem{#1}[#1]{#2}
    \aliascntresetthe{#1}
    \expandafter\def\csname #1autorefname\endcsname{#2}
    \newtheorem*{#1*}{#2}
}
\theoremstyle{plain} 
\theoremstyle{definition}  
\theoremstyle{remark}   
\newcommand{\Spec}{\operatorname{Spec}}
\newcommand{\RSHom}{\underline{\operatorname{RHom}}}
\newcommand{\SHom}{\underline{\operatorname{Hom}}}
\newcommand{\RHom}{\operatorname{RHom}}
\newcommand{\Hom}{\operatorname{Hom}}
\newcommand{\derotimes}{\overset{\LL}{\otimes}}
\newcommand{\CO}{\mathcal{O}}
\newcommand{\CF}{\mathcal{F}}
\newcommand{\CG}{\mathcal{G}}
\newcommand{\CI}{\mathcal{I}}
\newcommand{\LL}{\mathbb{L}}
\newcommand{\LX}{\overline{X}}
\newcommand{\coh}{\operatorname{coh}}
\newcommand{\id}{\operatorname{id}}
\newcommand{\ad}{\operatorname{ad}}
\newcommand{\tr}{\operatorname{tr}}
\newcommand{\qc}{\text{qc}}
\newcommand{\pr}{\operatorname{pr}}
\newcommand{\proj}{\operatorname{proj}}
\newcommand{\bc}{\operatorname{bc}}
\newcommand{\usc}[1][m]{\underline{\phantom{#1}}}
\renewcommand{\phi}{\varphi}
\renewcommand{\theta}{\vartheta}
\renewcommand{\epsilon}{\varepsilon}
\renewcommand{\to}[1][]{\xrightarrow{\ #1\ }}
\newcommand{\into}[1][]{\xhookrightarrow{\ #1\ }}
    \newcounter{themargin}
    \def\q?#1{\textcolor{Mahogany}{\textbf{???$^{\text{\arabic{themargin}}}$}}{\marginpar{\footnotesize\color{Mahogany}\fbox{\parbox{\marginparwidth}{\textbf{? --- \arabic{themargin} ---}\addtocounter{themargin}{1}\\ #1}}} \immediate\write16{}%
    \immediate\write16{Warning: There was still a question mark . . . }%
    \immediate\write16{}}}
\title{Grothendieck duality for non-proper morphisms}
\author{Tobias Schedlmeier}
\date{\vspace{-5ex}}
\begin{document}
\maketitle

\begin{abstract}
\noindent 
We generalize the adjunction between the functors $Rf_*$ and $f^!$ of derived categories of quasi-coherent sheaves for proper morphisms $f\colon X \to Y$ of Noetherian schemes to the following situation: Let $f$ be a finite type morphism and let $Z' \subseteq X$ and $Z \subseteq Y$ be closed subsets such that $f$ restricts to a proper morphism $f'\colon Z'\to Z$ of $f$. Then the functor $Rf_*$ is left adjoint to $R\Gamma_{Z'}f^!$ when considered as functors between complexes supported on $Z'$ or $Z$. 
\end{abstract}

\section*{Introduction}

Grothendieck's generalization of Serre duality is formulated in terms of adjoint functors. For a \emph{proper} morphism $f\colon X \to Y$ of Noetherian schemes of finite dimension it consists of the following quasi-isomorphism in the derived category of quasi-coherent sheaves on $Y$:
\begin{align} \label{GDuality}
	Rf_*\RSHom_{\CO_X}^{\bullet}(\CF^{\bullet},f^!\CG^{\bullet}) \overset{\sim}{\longrightarrow} \RSHom_{\CO_Y}^{\bullet}(Rf_*\CF^{\bullet},\CG^{\bullet})
\end{align}
for a bounded above complex $\CF^{\bullet}$ and a bounded below complex $\CG^{\bullet}$, both having coherent cohomology sheaves (\cite[VII, 3.4(c)]{HartshorneRD}). This isomorphism is known as \emph{coherent duality}. Taking $0$-th cohomology of the global sections functor, this implies that $Rf_*$ is left adjoint to the twisted inverse image functor $f^!$. The classical Serre duality for a coherent sheaf $\CF$ on a projective Cohen-Macaulay variety $X$ can be written in the form 
\[
	\Hom_k(H^i(X,\CF),k) \cong H^{n-i}(X,\SHom_{\CO_X}(\CF,\omega)),
\]
for instance. It is obtained by applying the above quasi-isomorphism \autoref{GDuality} to the structure morphism $X \to \Spec k$. 

In this paper we are concerned with morphisms $f$ which are proper only over closed subsets $Z'$ and $Z$ of $X$ and $Y$, which appear as the supports of the considered objects. Moreover, we just require that the cohomology sheaves of these objects are quasi-coherent. The main result is the following generalization of Grothendieck-Serre duality involving the functors $Rf_*$ and $R\Gamma_{Z'}f^!$, where $R\Gamma_{Z'}$ denotes the derived local cohomology functor:   
\begin{theorem*} 
Let $f\colon X \to Y$ be a separated and finite type morphism of Noetherian schemes and let $i\colon Z \to Y$ and $i'\colon Z' \to X$ be closed immersions with a proper morphism $f'\colon Z' \to Z$ such that the diagram
\[
	\xymatrix{
		Z' \ar[r]^-{i'} \ar[d]^-{f'} & X \ar[d]^-f \\
		Z \ar[r]^-i & Y
	}
\]
commutes. Then there is a natural transformation $\tr_f\colon Rf_*R\Gamma_{Z'}f^! \to \id$ such that, for all $\CF^{\bullet} \in D_{\qc}^-(\CO_X)_{Z'}$ and $\CG^{\bullet} \in D_{\qc}^+(\CO_Y)_Z$, the composition 
\begin{align*}
	\xymatrix{
		Rf_* \RSHom_{\CO_X}^{\bullet}(\CF^{\bullet},R\Gamma_{Z'}f^!\CG^{\bullet}) \ar[r] &  \RSHom_{\CO_Y}^{\bullet}(Rf_* \CF^{\bullet},Rf_*R\Gamma_{Z'}f^! \CG^{\bullet}) \ar[d]^{\tr_f} \\
		& \RSHom_{\CO_Y}^{\bullet}(Rf_* \CF^{\bullet}, \CG^{\bullet}),}
\end{align*}
where the first arrow is the natural map, is an isomorphism. Here $D_{\qc}^-(\CO_X)_{Z'}$ (and $D_{\qc}^+(\CO_Y)_Z$) denote the full subcategories of bounded above (and bounded below) complexes of the derived category of quasi-coherent sheaves on $X$ (and on $Y$) whose cohomology sheaves are supported on $Z'$ (and on $Z$). As a consequence, taking global sections, the functor $Rf_*$ is left adjoint to the functor $R\Gamma_{Z'}f^!$ between these categories. 
\end{theorem*}   
For the proof we employ Nagata compactification for the morphism $f$, which yields a factorization $X \overset{j}{\longrightarrow} \LX \overset{\overline{f}}{\longrightarrow} Y$ into an open immersion $j$ and a proper morphism $\overline{f}$. The key step is then to define the map $\tr_f$ in this more general situation by using the trace for the proper morphism $\overline{f}$. This idea has its origin in \cite{Ruelling}, where Chatzistamatiou and R\"ulling consider morphisms which are proper along a family of supports. In particular, when working with residual complexes, $\tr_f$ is even a morphism of complexes.

\section*{Acknowledgments} 

I cordially thank the supervisor of my PhD thesis, Manuel Blickle, for his excellent guidance and various inspiring conversations. I also thank Kay R\"ulling, who explained to us how the trace map underlying our adjunction could be constructed. Moreover, I thank Gebhard B\"ockle  and Axel St\"abler for many useful comments. The author was partially supported by SFB / Transregio 45 Bonn-Essen-Mainz financed by Deutsche Forschungsgemeinschaft.

\section*{Notation and conventions}

All schemes we consider are assumed to be Noetherian. In particular, all schemes and all scheme morphisms are \emph{concentrated}, i.e.\ quasi-compact and quasi-separated. For a scheme $X$, we let $D_{\qc}^*(X)$ or $D_{\coh}^*(X)$ with $* \in \{+,-,\text{b} \}$ denote the derived category of $\CO_X$-modules with quasi-coherent or coherent cohomology. Here $*=+$ or $*=-$ or $*=b$ means that we require that the cohomology sheaves are bounded below or bounded above or bounded in both directions.

\section{Local cohomology}

First, let us provide some basic facts about local cohomology which will be needed due to working with objects supported on closed subsets. Unless otherwise stated, let $i\colon Z \to X$ be a closed immersion of Noetherian schemes.    
\begin{definition} \label{deflocalcohomology}
The \emph{local cohomology functor} $R\Gamma_Z\colon D_{\qc}(X) \to D_{\qc}(X)$ is the derived functor of the left exact functor 
\[
	\Gamma_Z:= \underset{n \in \mathbb N}{\varinjlim} \SHom_{\CO_X}(\CO_X/\CI^n,\usc),
\]
where $\CI$ is any sheaf of ideals defining $Z$.
\end{definition}
A reference for local cohomology in this context is \cite{LocalHomology}. For example, \autoref{deflocalcohomology} is equation (0.1) of ibid.
\begin{definition} \label{supportdefine}
We say that a complex $\CF^{\bullet}$ of $\CO_X$-modules has \emph{support in} or \emph{on $Z$} or that $\CF^{\bullet}$ \emph{is supported in} or \emph{on $Z$} if $j^*\CF^{\bullet} = 0$ in $D(X)$. We write $D(X)_Z$, $D_{\qc}(X)_Z$ etc. for the subcategory of objects of $D(X)$, $D_{\qc}(X)$ etc. whose cohomology is supported on $Z$. 
\end{definition}   
The natural inclusion $\Gamma_Z \to \id$ induces a transformation $R\Gamma_Z \to \id$. As pointed out in the proof of \cite[Lemma (0.4.2)]{LocalHomology}, one has the following triangle:
\begin{proposition}
For every $\CF^{\bullet} \in D_{\qc}(X)$, there is a fundamental distinguished triangle
\[
	R\Gamma_Z \CF^{\bullet} \longrightarrow \CF^{\bullet} \longrightarrow Rj_*j^*\CF^{\bullet} \longrightarrow R\Gamma_Z \CF^{\bullet}[1],
\]
where the second map is the natural one from the adjunction of $Rj_*$ and $j^*$. This triangle restricts to the subcategories $D_{\qc}^+(X)$ and $D_{\qc}^b(X)$ because $j^*$ is exact and $Rj_*\colon D_{\qc}^+(U) \to D_{\qc}^+(X)$ has finite cohomological amplitude. 
\end{proposition}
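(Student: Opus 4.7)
The plan is to construct the triangle by lifting a short exact sequence of functors through a (K-)injective resolution. First I would verify the underived statement: for any injective quasi-coherent $\CO_X$-module $\CI$, the restriction $\CI \to j_*j^*\CI$ is surjective with kernel $\Gamma_Z \CI$. Surjectivity follows from the flasqueness of injective $\CO_X$-modules, since on any open $V \subseteq X$ the map $\CI(V) \to \CI(V \cap U)$ is then surjective; the identification of the kernel with $\Gamma_Z\CI$ is immediate from the definition. Hence for injective $\CI$ one obtains a short exact sequence
\[
0 \longrightarrow \Gamma_Z \CI \longrightarrow \CI \longrightarrow j_*j^*\CI \longrightarrow 0.
\]

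Next, I would choose a K-injective resolution $\CF^{\bullet} \to \CI^{\bullet}$ of $\CF^{\bullet}$ with each $\CI^n$ an injective quasi-coherent sheaf (Spaltenstein). Applying the above termwise produces a short exact sequence of complexes
\[
0 \longrightarrow \Gamma_Z \CI^{\bullet} \longrightarrow \CI^{\bullet} \longrightarrow j_*j^*\CI^{\bullet} \longrightarrow 0,
\]
whose outer terms compute $R\Gamma_Z \CF^{\bullet}$ and $Rj_*j^*\CF^{\bullet}$ respectively (the right-hand side because $j^*$ is exact and each $j^*\CI^n$ is flasque, hence $j_*$-acyclic). The standard passage from a short exact sequence of complexes to a distinguished triangle in the derived category gives the asserted fundamental triangle, and by construction the middle arrow is the unit of the $(j^*, Rj_*)$-adjunction.

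For the bounded subcategories: if $\CF^{\bullet} \in D_{\qc}^+(X)$, then $j^*\CF^{\bullet} \in D_{\qc}^+(U)$ by exactness of $j^*$, and $Rj_*j^*\CF^{\bullet} \in D_{\qc}^+(X)$ since $Rj_*$ preserves $D^+$. The third vertex of a distinguished triangle whose other two vertices lie in $D^+$ lies in $D^+$, so $R\Gamma_Z \CF^{\bullet} \in D_{\qc}^+(X)$. For $D^b$ one further invokes the finite cohomological amplitude of $Rj_*\colon D_{\qc}^+(U) \to D_{\qc}^+(X)$ to conclude that $Rj_*j^*\CF^{\bullet}$ is bounded whenever $\CF^{\bullet}$ is. The main subtlety is accommodating unbounded $\CF^{\bullet}$, which forces the use of K-injective resolutions in $D_{\qc}(X)$; for bounded below complexes ordinary injective resolutions suffice and the argument proceeds without technical obstacles.
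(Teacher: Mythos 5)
Your construction is correct and is essentially the argument of the paper's cited source for this statement (\cite[Lemma (0.4.2)]{LocalHomology}): reduce to the termwise short exact sequence $0 \to \Gamma_Z\CI \to \CI \to j_*j^*\CI \to 0$ for quasi-coherent injectives (where flasqueness gives surjectivity and Noetherianness plus quasi-coherence identify the kernel of restriction with $\varinjlim \SHom_{\CO_X}(\CO_X/\CI^n,\usc)$), and then pass to a suitable resolution. One justification is too thin in the unbounded case: for a complex that is merely termwise $j_*$-acyclic, applying $j_*$ termwise need not compute $Rj_*$ when the complex is unbounded. This is easily repaired, either by noting that $j^*$ preserves K-injective complexes (it has the exact left adjoint $j_!$, extension by zero), so that $j_*j^*\CI^{\bullet}$ computes $Rj_*j^*\CF^{\bullet}$ on the nose, or by invoking the finite cohomological dimension of $j_*$ on quasi-coherent sheaves, which also underlies the boundedness assertion at the end of the statement. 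With that adjustment the proof is complete.
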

In particular, $R\Gamma_Z$ only depends on the closed subset $i(Z)$ and not on the scheme structure of $Z$. The fundamental triangle allows another characterization of $D_{\qc}(X)_Z$: 
\begin{corollary}
The subcategory $D_{\qc}(X)_Z$ consists of all complexes $\CF^{\bullet} \in D_{\qc}(X)$ such that the natural map $R\Gamma_Z\CF^{\bullet} \to \CF^{\bullet}$ is an isomorphism.
\end{corollary}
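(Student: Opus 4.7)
The plan is to read the statement off of the fundamental distinguished triangle provided by the preceding proposition, together with the standard fact that $j^{*}Rj_{*} \simeq \id$ for an open immersion $j$.

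Concretely, let $j\colon U = X\setminus Z \hookrightarrow X$ be the complementary open immersion, and consider the triangle
\[
	R\Gamma_Z \CF^{\bullet} \longrightarrow \CF^{\bullet} \longrightarrow Rj_*j^*\CF^{\bullet} \longrightarrow R\Gamma_Z \CF^{\bullet}[1].
\]
Rotating this triangle, the natural map $R\Gamma_Z\CF^{\bullet} \to \CF^{\bullet}$ is an isomorphism in $D_{\qc}(X)$ if and only if the third vertex $Rj_*j^*\CF^{\bullet}$ is zero. So the corollary amounts to the equivalence
\[
	j^{*}\CF^{\bullet} = 0 \quad \Longleftrightarrow \quad Rj_{*}j^{*}\CF^{\bullet} = 0.
\]

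First I would treat the easy direction: if $\CF^{\bullet} \in D_{\qc}(X)_Z$, then $j^{*}\CF^{\bullet}=0$ in $D(U)$ by \autoref{supportdefine}, and hence $Rj_{*}j^{*}\CF^{\bullet}=0$; the triangle then forces $R\Gamma_Z\CF^{\bullet}\to\CF^{\bullet}$ to be an isomorphism. For the converse, assume $R\Gamma_Z\CF^{\bullet}\to\CF^{\bullet}$ is an isomorphism, so $Rj_{*}j^{*}\CF^{\bullet}=0$. Apply the exact functor $j^{*}$ to this vanishing and use the canonical isomorphism $j^{*}Rj_{*}\simeq \id$ on $D(U)$, which holds because $j$ is an open immersion (at the level of sheaves $j^{*}j_{*}=\id$, and $j^{*}$ is exact so the identity passes to the derived functor). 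This yields $j^{*}\CF^{\bullet}=0$, i.e. $\CF^{\bullet}\in D_{\qc}(X)_Z$.

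There is essentially no obstacle here; the only small subtlety is verifying $j^{*}Rj_{*}\simeq\id$ for the open immersion $j$, which is standard and follows by resolving objects of $D(U)$ by $j_{*}$-acyclic (e.g.\ injective) complexes and using $j^{*}j_{*}=\id$ on sheaves. Everything else is a direct consequence of the fundamental triangle.
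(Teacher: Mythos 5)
Your argument is correct and is precisely the one the paper leaves implicit: the corollary is stated as an immediate consequence of the fundamental triangle, and the only content is the equivalence $j^*\CF^{\bullet}=0 \Leftrightarrow Rj_*j^*\CF^{\bullet}=0$, which you settle via the standard isomorphism $j^*Rj_*\simeq\id$ for the open immersion $j$ (a fact the paper itself invokes later, in the proof of \autoref{opensupport}). No gaps; this matches the intended proof.
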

\begin{lemma}
If $\CI$ is an injective quasi-coherent sheaf, then also the quasi-coherent sheaf $\Gamma_Z(\CI)$ is injective.
\end{lemma}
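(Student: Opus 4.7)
The plan relies on the Matlis-type structure theorem for injective quasi-coherent sheaves on a Noetherian scheme, together with the observation that the functor $\Gamma_Z$ commutes with direct sums. To avoid the notational clash in the statement I write $\CJ$ for an ideal sheaf defining $Z$, reserving the symbol $\CI$ for the injective quasi-coherent sheaf of the lemma.

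By the sheaf-theoretic Matlis structure theorem (see e.g.\ Hartshorne's \emph{Residues and Duality}, Chapter~II, Section~7), the injective quasi-coherent sheaf $\CI$ on the Noetherian scheme $X$ admits a direct sum decomposition
\[
	\CI \;\cong\; \bigoplus_{x\in X} J(x)^{(S_x)},
\]
where $J(x)$ is the quasi-coherent sheaf on $X$ obtained by pushing forward the injective hull $I_x$ of $k(x)$ over $\CO_{X,x}$, and $S_x$ is some index set. Each $J(x)$ has vanishing stalks away from $x$ and stalk $I_x$ at $x$, where $I_x$ is $\mathfrak m_x$-power torsion. Since $\Gamma_Z=\varinjlim_n\SHom_{\CO_X}(\CO_X/\CJ^n,\usc)$ is a filtered colimit of functors each of which commutes with direct sums (as $\CO_X/\CJ^n$ is coherent on the Noetherian scheme $X$), $\Gamma_Z$ itself commutes with arbitrary direct sums. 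A stalk-wise computation then yields $\Gamma_Z(J(x))=J(x)$ when $x\in Z$ (because then $\CJ_x\subseteq\mathfrak m_x$, so every $\mathfrak m_x$-power torsion element is automatically $\CJ_x$-power torsion) and $\Gamma_Z(J(x))=0$ when $x\notin Z$ (then $\CJ_x=\CO_{X,x}$, so there is no torsion). Combining these gives
\[
	\Gamma_Z(\CI) \;\cong\; \bigoplus_{x\in Z} J(x)^{(S_x)},
\]
a direct sum of injective quasi-coherent sheaves, which is itself injective by the scheme-theoretic form of Bass's theorem stating that in the locally Noetherian Grothendieck category of quasi-coherent sheaves on a Noetherian scheme arbitrary direct sums of injective objects remain injective.

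The main potential obstacle is not any step of the above but the availability of its two inputs --- the Matlis-type decomposition of injective quasi-coherent sheaves and Bass's theorem on the closure of injectives under direct sums --- in the form needed for quasi-coherent sheaves on a Noetherian scheme; both are classical and may be found in Hartshorne's \emph{Residues and Duality}. Attempting instead a direct Baer-style verification runs into the difficulty that the naive extension of a map $\CA\to\Gamma_Z(\CI)$ to $\CA\to\CI$ produced by injectivity of $\CI$ need not land in $\Gamma_Z(\CI)$, which is precisely the reason the structural decomposition is useful.
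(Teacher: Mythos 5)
Your argument is correct in substance, but it takes a genuinely different route from the paper, whose entire proof is two lines: injectivity of a quasi-coherent sheaf on a Noetherian scheme may be checked on an affine open cover, and the resulting module-theoretic statement (for a Noetherian ring $R$, an ideal $\mathfrak{a}$ and an injective $R$-module $I$, the module $\Gamma_{\mathfrak{a}}(I)$ is again injective) is quoted from Brodmann--Sharp, Proposition 2.1.4. The proof of that cited result is exactly the ``direct Baer-style verification'' you dismiss at the end: given $\mathfrak{b}\to\Gamma_{\mathfrak{a}}(I)$ with $\mathfrak{b}$ finitely generated, the image is killed by $\mathfrak{a}^n$ for some $n$, Artin--Rees produces $m$ with $\mathfrak{a}^m\cap\mathfrak{b}\subseteq\mathfrak{a}^n\mathfrak{b}$, so the map factors through $(\mathfrak{b}+\mathfrak{a}^m)/\mathfrak{a}^m\subseteq R/\mathfrak{a}^m$, and any extension $R/\mathfrak{a}^m\to I$ automatically lands in $\Gamma_{\mathfrak{a}}(I)$ because its image is $\mathfrak{a}$-power torsion. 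So the obstacle you correctly identify is real but is precisely what Artin--Rees removes; your structural detour is not forced. Your route via the Matlis decomposition, the computation of $\Gamma_Z$ on each $J(x)$, and Bass's theorem is nevertheless valid and makes the mechanism more visible, at the price of heavier inputs. One imprecision to fix: $J(x)$ is not a skyscraper at $x$; its stalk at every specialization $y$ of $x$ is again $I_x$ (viewed as an $\CO_{X,y}$-module), so $J(x)$ has nonvanishing stalks on all of $\overline{\{x\}}$, not only at $x$. This does not damage your conclusion --- since $Z$ is closed, $x\in Z$ forces $\overline{\{x\}}\subseteq Z$ and every section of $J(x)$ is $\mathcal{J}$-power torsion, while for $x\notin Z$ some element of $\mathcal{J}$ maps to a unit in $\CO_{X,x}$ and hence acts invertibly on $I_x$, killing all $\mathcal{J}$-torsion --- but the stalkwise computation should be carried out over $\overline{\{x\}}$ rather than over $\{x\}$.
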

\begin{proof}
It suffices to check the injectivity of $\Gamma_Z(\CI)$ locally. Thus the assertion follows from (\cite[Proposition 2.1.4]{Brodmann.LocCoh}).
\end{proof}
As $R\Gamma_Z \circ R\Gamma_Z \cong R\Gamma_Z$, the image of $R\Gamma_Z$ is exactly the subcategory $D_{\qc}(X)_Z$. Furthermore, the functor $R\Gamma_Z$ is right adjoint to the inclusion $D_{\qc}(X)_Z \into D_{\qc}(X)$. This is a consequence of the following proposition, see the proof of \autoref{qcohadjunction}.
\begin{proposition} \label{Gammaadj}
Let $\CG^{\bullet}$ be a complex in $D_{\qc}(X)_Z$. Then there is a functorial isomorphism
\[
	\RSHom_{\CO_X}^{\bullet}(\CG^{\bullet},R\Gamma_Z\CF^{\bullet}) \cong \RSHom_{\CO_X}^{\bullet}(\CG^{\bullet},\CF^{\bullet})
\]
for every $\CF^{\bullet} \in D_{\qc}(X)$.
\end{proposition}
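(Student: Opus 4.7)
The natural approach is to reduce the statement to the vanishing of $\RSHom_{\CO_X}^{\bullet}(\CG^{\bullet}, Rj_*j^*\CF^{\bullet})$ by means of the fundamental distinguished triangle recalled in the excerpt. Explicitly, the plan is to apply $\RSHom_{\CO_X}^{\bullet}(\CG^{\bullet},-)$ to the triangle
\[
	R\Gamma_Z \CF^{\bullet} \longrightarrow \CF^{\bullet} \longrightarrow Rj_*j^*\CF^{\bullet} \longrightarrow R\Gamma_Z \CF^{\bullet}[1],
\]
where $j\colon X\setminus Z \hookrightarrow X$ is the open complement of $i$. Since $\RSHom_{\CO_X}^{\bullet}(\CG^{\bullet},-)$ is exact as a functor of triangulated categories, this produces a distinguished triangle
\[
	\RSHom_{\CO_X}^{\bullet}(\CG^{\bullet},R\Gamma_Z\CF^{\bullet}) \longrightarrow \RSHom_{\CO_X}^{\bullet}(\CG^{\bullet},\CF^{\bullet}) \longrightarrow \RSHom_{\CO_X}^{\bullet}(\CG^{\bullet},Rj_*j^*\CF^{\bullet}) \longrightarrow ,
\]
so it suffices to show that the third term vanishes; the first map will then be the desired functorial isomorphism.

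To handle the third term, I would invoke the standard sheaf-level adjunction between $j^*$ and $Rj_*$ in the form
\[
	\RSHom_{\CO_X}^{\bullet}(\CG^{\bullet}, Rj_*j^*\CF^{\bullet}) \;\cong\; Rj_*\,\RSHom_{\CO_U}^{\bullet}(j^*\CG^{\bullet}, j^*\CF^{\bullet}),
\]
where $U = X \setminus Z$. This is a consequence of the usual derived tensor-Hom adjunction together with the fact that for the open immersion $j$ one has $j^! = j^*$ and $j^*$ is exact. By the hypothesis $\CG^{\bullet} \in D_{\qc}(X)_Z$ combined with \autoref{supportdefine}, we have $j^*\CG^{\bullet} = 0$ in $D(U)$. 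Consequently the inner $\RSHom$ vanishes, and so does its direct image.

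Plugging this vanishing into the triangle shows that the first arrow
\[
	\RSHom_{\CO_X}^{\bullet}(\CG^{\bullet},R\Gamma_Z\CF^{\bullet}) \longrightarrow \RSHom_{\CO_X}^{\bullet}(\CG^{\bullet},\CF^{\bullet})
\]
is an isomorphism, which is moreover functorial in both arguments since the fundamental triangle is. The main technical obstacle is verifying the displayed sheaf-level adjunction at the level of derived categories of $\CO_X$-modules (rather than just after applying $R\Hom$); this is by now standard after Spaltenstein and Lipman, and for concentrated morphisms of Noetherian schemes one can replace $\CF^{\bullet}$ by a K-injective resolution so that both sides are computed by honest sheaf-Hom. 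Apart from this, everything is a formal manipulation with the fundamental triangle and the support hypothesis on $\CG^{\bullet}$.
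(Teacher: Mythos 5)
Your proof is correct. The paper itself does not spell out an argument here---it simply cites \cite[Lemma (0.4.2)]{LocalHomology}---and your reduction via the fundamental triangle to the vanishing of $\RSHom_{\CO_X}^{\bullet}(\CG^{\bullet},Rj_*j^*\CF^{\bullet})\cong Rj_*\RSHom_{\CO_U}^{\bullet}(j^*\CG^{\bullet},j^*\CF^{\bullet})$, using the sheafified $(j^*,Rj_*)$ adjunction (\cite[Proposition (3.2.3)]{LipmanGrothDual}, valid for unbounded complexes) and $j^*\CG^{\bullet}=0$ from \autoref{supportdefine}, is precisely the standard argument behind that citation.
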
    
\begin{proof}
This is \cite[Lemma (0.4.2)]{LocalHomology}. We even do not have to assume that the cohomology sheaves of $\CF^{\bullet}$ and $\CG^{\bullet}$ are quasi-coherent. 
\end{proof}
Next we verify the compatibility of $R\Gamma_Z$ with the derived functors $Rf_*$, $f^!$ and $\derotimes$. Let us fix a notation for base change:
\begin{definition} \label{bchange}
Let $f\colon X \to Y$ be a separated morphism of finite type and let $u\colon Y' \to Y$ be a flat morphism. Consider the cartesian square
\[
	\xymatrix{
		X \times_Y Y' \ar[r]^-{v} \ar[d]^-{f'} & X \ar[d]^-f \\
		Y' \ar[r]^-u & Y,
	}
\]
where $v$ and $u$ are the projections. The \emph{base change morphism} $\bc\colon u^*Rf_* \overset{\sim}{\longrightarrow} Rf'_*v^*$ is the adjoint of the composition 
\[
Rf_* \xrightarrow{Rf_*\operatorname{ad}_{v}} Rf_*Rv_*v^* \overset{\sim}{\longrightarrow} Ru_*Rf'_*v^*.
\]
Here $\operatorname{ad}_{v}$ is the unit of the adjunction between $Rv_*$ and $v^*$. 
\end{definition}
The map $\bc$ is an isomorphism in several cases. For our purposes, we will need the case of flat base change: 
\begin{lemma}[\protect{\cite[Proposition 3.9.5]{LipmanGrothDual}}]
With the notation of the preceding definition, the map $\bc$ is an isomorphism if $u$ is flat. 
\end{lemma}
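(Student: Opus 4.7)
The plan is to verify the assertion by reducing to an affine situation and then to a \v{C}ech computation. Since being an isomorphism in $D(Y')$ is local on $Y'$, and $Y'$ can be covered by affine opens each of which maps into an affine open of $Y$, we may assume $Y = \Spec A$ and $Y' = \Spec B$ with $A \to B$ a flat ring map. Because $f$ is separated and $Y$ is affine, $X$ is separated; together with finite type, this yields a finite affine open cover $\mathfrak{U} = \{U_\alpha\}$ of $X$ all of whose finite intersections are again affine. The preimage $\mathfrak{U}' = \{v^{-1}(U_\alpha)\}$ is then a finite affine open cover of $X' = X \times_Y Y'$ with the same property, since the formation of affine opens is stable under the base change $v$.

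Next, I would reduce from a complex $\CF^\bullet$ in $D_{\qc}^+(X)$ to a single quasi-coherent sheaf $\CF$ by a way-out argument: both functors $u^*Rf_*$ and $Rf'_*v^*$ are way-out right on $D_{\qc}^+(X)$ (since $u^*$ and $v^*$ are exact and $Rf_*$, $Rf'_*$ have bounded cohomological amplitude on quasi-coherent sheaves over a qcqs base), and a natural transformation between such functors is an isomorphism as soon as it is an isomorphism on individual sheaves. For a quasi-coherent sheaf $\CF$, the sheaf $R^if_*\CF$ is computed by the \v{C}ech complex $\check{C}^\bullet(\mathfrak{U},\CF)$, and similarly $R^if'_*v^*\CF$ by $\check{C}^\bullet(\mathfrak{U}',v^*\CF)$. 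If $U_\alpha = \Spec C_\alpha$ and $\CF|_{U_\alpha}$ corresponds to the $C_\alpha$-module $M_\alpha$, then $v^{-1}(U_\alpha) = \Spec(C_\alpha \otimes_A B)$ and $v^*\CF|_{v^{-1}(U_\alpha)}$ corresponds to $M_\alpha \otimes_A B$. Since $B$ is flat over $A$, the functor $- \otimes_A B$ commutes with taking cohomology of the \v{C}ech complex, giving a termwise identification $u^*\check{C}^\bullet(\mathfrak{U},\CF) \cong \check{C}^\bullet(\mathfrak{U}',v^*\CF)$ of complexes on $Y'$.

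The main obstacle is to verify that this \v{C}ech-level isomorphism is the one induced by the abstractly defined $\bc$, which is built from the unit $\ad_v\colon \id \to Rv_*v^*$ and the composition isomorphism $Rf_*Rv_* \cong Ru_*Rf'_*$. Unwinding the definition in the affine setting amounts to a diagram chase identifying the unit of $(v^*,Rv_*)$ on the \v{C}ech pieces with the canonical maps $M_\alpha \to M_\alpha \otimes_A B$, which is formal once compatible resolutions are chosen. Everything else — the affine reductions, the way-out passage to a single sheaf, and the input from flatness — is standard; this bookkeeping is the only delicate point of the argument.
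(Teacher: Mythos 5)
The paper does not actually prove this lemma: it is quoted verbatim from \cite[Proposition 3.9.5]{LipmanGrothDual} and used as a black box. Your proposal therefore cannot be compared with an internal argument; judged on its own, it is the standard \v{C}ech-theoretic proof of flat base change (the one in Hartshorne III.9.3 or the Stacks Project), and its skeleton is sound: localize on $Y'$ to reduce to a flat ring map $A \to B$, use separatedness to get a finite affine cover of $X$ with affine intersections, compute $Rf_*$ of a quasi-coherent sheaf by the \v{C}ech complex, and observe that $-\otimes_A B$ identifies the two \v{C}ech complexes termwise. Two caveats. First, your way-out reduction is stated only for $D_{\qc}^+(X)$, but the paper applies $\bc$ to unbounded complexes --- in \autoref{Gammacomm} and \autoref{opensupport} it is applied to $R\Gamma_{Z'}\CF^{\bullet}$ for $\CF^{\bullet} \in D_{\qc}(X)$ with no boundedness hypothesis, and Lipman's statement is likewise unbounded. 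This is fixable: $Rf_*$ has finite cohomological dimension on quasi-coherent sheaves for a concentrated morphism (bounded by the size of the affine cover), so both functors are way-out in both directions and the full way-out lemma (or a truncation/homotopy-colimit argument) applies; but as written your argument proves a strictly weaker statement than the one the paper uses.

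Second, the step you defer --- that the termwise \v{C}ech identification agrees with the map $\bc$ defined abstractly via $\ad_v$ and $Rf_*Rv_* \cong Ru_*Rf'_*$ --- is not mere bookkeeping; it is where essentially all of the content of Lipman's proof lives, and it also subsumes the unstated compatibility you need in the very first reduction (that restricting $\bc$ over an affine open of $Y'$ yields the $\bc$ of the restricted square). The check is doable and standard (one verifies that in degree zero the map is pullback of sections $\Gamma(X,\CF)\otimes_A B \to \Gamma(X',v^*\CF)$ and that the \v{C}ech differentials are respected), but a complete proof must carry it out rather than assert it. With those two points repaired, your argument is a legitimate, more elementary substitute for the citation; what the citation buys the paper is precisely not having to do this compatibility bookkeeping and getting the unbounded case for free.
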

\begin{lemma} \label{Gammacomm}
Let $f\colon X \to Y$ be a morphism of finite type and $i\colon Z \to Y$ a closed immersion. Let $Z'$ denote the fiber product $Z \times_Y X$ regarded as a closed subset of $X$ via the projection $Z \times_Y X \to X$.  
\begin{enumerate}
\item There is a natural isomorphism of functors
\[
	Rf_*R\Gamma_{Z'} \cong R\Gamma_ZRf_*.
\]
\item If $f$ is flat, then there is a natural isomorphism of functors
\[
	f^*R\Gamma_Z \cong R\Gamma_{Z'}f^*.
\]
\end{enumerate}
\end{lemma}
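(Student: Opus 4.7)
My plan is to deduce both statements from the fundamental distinguished triangle characterizing $R\Gamma_{(-)}$ combined with flat base change along the open immersions complementary to the given closed sets. Let $j\colon V := Y \setminus Z \hookrightarrow Y$ be the open complement of $Z$ and $j'\colon U := X \setminus Z' \hookrightarrow X$ the open complement of $Z'$. Since $Z' = Z \times_Y X$ equals $f^{-1}(Z)$ set-theoretically, $U = f^{-1}(V)$ and the square
\[
\xymatrix{
  U \ar[r]^-{j'} \ar[d]_-{g} & X \ar[d]^-f \\
  V \ar[r]^-{j} & Y
}
\]
is cartesian, with $g := f|_U$. Note that since $R\Gamma_Z$ only depends on the underlying closed set, we may work with this set-theoretic description on both sides.

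For part (a), I would apply $Rf_*$ to the fundamental triangle on $X$ (from the proposition recalled above)
\[
Rf_* R\Gamma_{Z'} \CF^{\bullet} \longrightarrow Rf_* \CF^{\bullet} \longrightarrow Rf_* Rj'_* (j')^* \CF^{\bullet} \longrightarrow +1.
\]
Using the equality $Rf_* Rj'_* = Rj_* Rg_*$ (since $f \circ j' = j \circ g$) and the flat base change isomorphism $Rg_* (j')^* \overset{\sim}{\to} j^* Rf_*$ (available because $j$ is flat, by the lemma stated just before), I can identify the third term canonically with $Rj_* j^* Rf_* \CF^{\bullet}$. Comparing with the fundamental triangle on $Y$ applied to $Rf_* \CF^{\bullet}$ and invoking the standard fact that the third vertex of a distinguished triangle is determined up to canonical isomorphism by the other two vertices and the morphism between them, I obtain the desired natural isomorphism $Rf_* R\Gamma_{Z'} \cong R\Gamma_Z Rf_*$.

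For part (b), the argument is dual: since $f$ is flat, $f^*$ is exact, and I apply it to the fundamental triangle on $Y$,
\[
f^* R\Gamma_Z \CG^{\bullet} \longrightarrow f^* \CG^{\bullet} \longrightarrow f^* Rj_* j^* \CG^{\bullet} \longrightarrow +1.
\]
Flat base change, this time for $f$ (which is flat by hypothesis), yields $f^* Rj_* \overset{\sim}{\to} Rj'_* v^*$ for the cartesian square above (with $v = g$), and combining with the trivial identification $v^* j^* = (j')^* f^*$ rewrites the third term as $Rj'_* (j')^* f^* \CG^{\bullet}$. The resulting triangle then matches the fundamental triangle on $X$ applied to $f^* \CG^{\bullet}$, and again the isomorphism $f^* R\Gamma_Z \CG^{\bullet} \cong R\Gamma_{Z'} f^* \CG^{\bullet}$ follows from uniqueness of the third vertex.

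The main obstacle in both parts is not the construction of the isomorphism on objects but the verification that it is canonical, i.e.\ that the identifications of the third vertex intertwine the two maps emanating from the middle term $Rf_* \CF^{\bullet}$ (respectively $f^* \CG^{\bullet}$). In each case these maps are induced by the unit of the $(j^*, Rj_*)$- (resp. $((j')^*, Rj'_*)$-) adjunction, and the compatibility one needs is precisely the standard naturality of the base change map with respect to units of adjunction; I would reduce this to a diagram chase using the construction of $\bc$ in \autoref{bchange}. Once this compatibility is in place, functoriality of the resulting isomorphism in $\CF^{\bullet}$ or $\CG^{\bullet}$ is automatic, finishing the proof.
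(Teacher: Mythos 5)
Your overall strategy is the same as the paper's: compare the two fundamental triangles for $R\Gamma_{Z'}$ on $X$ and $R\Gamma_Z$ on $Y$, identifying the third vertices via $Rf_*Rj'_* \cong Rj_*Rg_*$ and flat base change along the open complement of $Z$ (and, for (b), exactness of $f^*$ and flat base change along $f$). The compatibility you single out -- that the identification of the third vertices intertwines the two adjunction-unit maps out of the middle term -- is precisely the square the paper verifies by unwinding the definition of $\bc$ from \autoref{bchange}. So the skeleton is sound.

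There is, however, one genuine gap: your final claim that once the right-hand square commutes, ``functoriality of the resulting isomorphism is automatic.'' It is not. Completing a commutative square on the second and third vertices to a morphism of distinguished triangles produces a map on the first vertices that exists but is in general neither unique nor functorial (the usual non-functoriality of cones), so ``uniqueness of the third vertex up to canonical isomorphism'' is exactly the point that needs an argument. The paper avoids this by \emph{constructing} the comparison map $\alpha\colon Rf_*R\Gamma_{Z'} \to R\Gamma_ZRf_*$ a priori: it first checks that $Rf_*R\Gamma_{Z'}$ is supported on $Z$ (via $u^*Rf_*R\Gamma_{Z'} \cong Rf'_*v^*R\Gamma_{Z'}=0$, i.e.\ the same base change computation you use), and then invokes \autoref{Gammaadj} to factor the canonical map $Rf_*R\Gamma_{Z'}\to Rf_*$ canonically through $R\Gamma_ZRf_*$; only afterwards does the triangle comparison show $\alpha$ is an isomorphism. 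If you want to stay with your formulation, you must supply the uniqueness yourself, e.g.\ by noting that for $A := Rf_*R\Gamma_{Z'}\CF^{\bullet}$ supported on $Z$ one has $\Hom(A,\,Rj_*j^*Rf_*\CF^{\bullet}[-1]) \cong \Hom(j^*A,\,j^*Rf_*\CF^{\bullet}[-1]) = 0$, which forces the completing morphism to be unique and hence natural. Either fix is short, but as written the proof of naturality -- which is what the word ``natural'' in the statement demands, and what later arguments (e.g.\ \autoref{opentrace}) rely on -- is missing.
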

\begin{proof}
First we show that $Rf_*R\Gamma_{Z'}$ is supported on $Z$. Let $u\colon U \into Y$ and $v\colon V \into X$ be the open immersions of the complements of $Z$ and $Z'$ in $Y$ and $X$. Let $f'$ denote the restriction of $f$ to $V$. We obtain a cartesian square
\[
	\xymatrix{
		V \ar[r]^-{v} \ar[d]^-{f'} & X \ar[d]^-f \\
		U \ar[r]^-u & Y.
	}
\]
Hence $u^*Rf_*R\Gamma_{Z'} \cong Rf'_*v^*R\Gamma_{Z'} = 0$. From \autoref{Gammaadj} we know that the canonical morphism $Rf_*R\Gamma_{Z'} \to Rf_*$ factors through $R\Gamma_ZRf_*$. Let $\alpha$ denote the corresponding morphism $Rf_*R\Gamma_{Z'} \to R\Gamma_ZRf_*$. 

The square
\[
	\xymatrix@C40pt{
		Ru_*u^*Rf_* \ar[r]^-{Ru_*\bc} & Ru_*Rf'_*v^* \\
		Rf_* \ar[u]^-{\ad_uRf_*} \ar[r]^-{Rf_*\ad_v} & Rf_*Rv_*v^* \ar[u]^-{\sim}
	}
\]
commutes because $Ru_*\bc \circ \ad_uRf_*$ is the adjoint of $\bc$ and hence equals the original morphism $Rf_* \to Ru_*Rf'_*v^*$. Let $\beta$ be the composition of the natural isomorphism $Rf_*Rv_*v^* \simeq Ru_*Rf'_*v^*$ with the inverse of $Ru_*\bc$. We have just seen that the right square of the diagram
\[
	\xymatrix{
		Rf_*R\Gamma_{Z'} \ar[d]^{\alpha} \ar[r] & Rf_* \ar@{=}[d] \ar[r] & Rf_*Rv_*v^* \ar[d]_{\sim}^{\beta} \\ 
		R\Gamma_ZRf_* \ar[r] & Rf_* \ar[r] & Ru_*u^*Rf_* 
	}
\]
commutes. The left square commutes by construction. As the lines are distinguished triangles, $\alpha$ is an isomorphism. This shows (a).

For (b) we proceed similarly. The cartesian square above gives rise to the isomorphism 
\[
	\xymatrix{
		f^*R\Gamma_Z \ar[d]^{\sim} \ar[r] & f^* \ar@{=}[d] \ar[r] & f^*Ru_*u^* \ar[d]_{\sim} \\ 
		R\Gamma_{Z'}f^* \ar[r] & f^* \ar[r] & Rv_*v^*f^* 
	}
\]
of distinguished triangles. 
\end{proof}
\begin{remark} \label{underGammaPullback}
With the notation of part (b) of the preceding lemma, for every quasi-coherent sheaf $\CF$, we even have a natural isomorphism
\[
	f^*\Gamma_Z\CF \cong \Gamma_{Z'}f^*\CF,
\] 
see \cite[Lemma 4.3.1]{Brodmann.LocCoh}. 
\end{remark}
\begin{lemma} \label{RGammaTensor}    
Let $\CF^{\bullet}$ and $\CG^{\bullet}$ be complexes in $D_{\qc}^b(X)$. There are natural isomorphisms
\[
	(R\Gamma_Z\CF^{\bullet}) \derotimes_{\CO_X} \CG^{\bullet} \cong \CF^{\bullet} \derotimes_{\CO_X} (R\Gamma_Z\CG^{\bullet}) \cong R\Gamma_Z(\CF^{\bullet} \derotimes_{\CO_X} \CG^{\bullet})
\]
in $D_{\qc}(X)$.
\end{lemma}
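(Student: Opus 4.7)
The plan is to reduce both isomorphisms of the lemma to a single projection-formula computation via the fundamental triangle
\[
R\Gamma_Z\mathcal{H}^{\bullet} \to \mathcal{H}^{\bullet} \to Rj_*j^*\mathcal{H}^{\bullet} \to R\Gamma_Z\mathcal{H}^{\bullet}[1]
\]
established in the earlier proposition, where $j\colon U := X \setminus Z \hookrightarrow X$ is the open immersion of the complement (recall that $R\Gamma_Z$ depends only on the underlying closed subset). The two outer terms of the claim differ from $R\Gamma_Z(\CF^{\bullet}\derotimes\CG^{\bullet})$ only in which factor carries $R\Gamma_Z$, so it suffices to construct a natural isomorphism $(R\Gamma_Z\CF^{\bullet})\derotimes\CG^{\bullet}\cong R\Gamma_Z(\CF^{\bullet}\derotimes\CG^{\bullet})$ and then swap the roles of $\CF^{\bullet}$ and $\CG^{\bullet}$ to obtain the other half.

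Concretely, I would tensor the fundamental triangle for $\CF^{\bullet}$ with $\CG^{\bullet}$ to obtain the distinguished triangle
\[
(R\Gamma_Z\CF^{\bullet})\derotimes\CG^{\bullet} \to \CF^{\bullet}\derotimes\CG^{\bullet} \to (Rj_*j^*\CF^{\bullet})\derotimes\CG^{\bullet} \to [+1],
\]
and compare it with the fundamental triangle of $\CF^{\bullet}\derotimes\CG^{\bullet}$. The middle terms agree, and the key step is to identify the right-hand terms via the projection formula for the concentrated morphism $j$: since $j^*$ is exact and commutes with $\derotimes$, one has
\[
(Rj_*j^*\CF^{\bullet})\derotimes_{\CO_X}\CG^{\bullet} \cong Rj_*\bigl(j^*\CF^{\bullet}\derotimes_{\CO_U} j^*\CG^{\bullet}\bigr) \cong Rj_*j^*\bigl(\CF^{\bullet}\derotimes_{\CO_X}\CG^{\bullet}\bigr).
\]
Naturality of the adjunction unit $\id \to Rj_*j^*$ makes the resulting right-hand square of the comparison diagram commute; TR3 then produces a morphism of triangles, and the triangulated five lemma upgrades the induced map on the left to the desired isomorphism.

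The main obstacle is precisely this compatibility check: one must unravel the standard construction of the projection morphism for $j$ to verify that it is natural with respect to the unit $\CF^{\bullet}\derotimes\CG^{\bullet} \to Rj_*j^*(\CF^{\bullet}\derotimes\CG^{\bullet})$. A secondary point, used silently throughout, is that the boundedness assumption $\CF^{\bullet},\CG^{\bullet}\in D_{\qc}^b(X)$ keeps all functors in the right category: the open immersion $j$ is quasi-compact and quasi-separated, so on the Noetherian scheme $X$ the functor $Rj_*$ has finite cohomological amplitude, and K-flat resolutions make $\derotimes$ well-behaved on bounded complexes of quasi-coherent sheaves. Having established the isomorphism above, swapping the roles of $\CF^{\bullet}$ and $\CG^{\bullet}$ yields $\CF^{\bullet}\derotimes(R\Gamma_Z\CG^{\bullet})\cong R\Gamma_Z(\CF^{\bullet}\derotimes\CG^{\bullet})$, and composing the two isomorphisms gives the remaining one in the statement.
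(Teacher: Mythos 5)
Your proposal is correct and follows essentially the same route as the paper: tensor the fundamental local-cohomology triangle with the other factor, identify the third terms via the projection formula for the open immersion $j$ together with $j^*(\CF^{\bullet}\derotimes\CG^{\bullet})\cong j^*\CF^{\bullet}\derotimes j^*\CG^{\bullet}$, and conclude from the resulting morphism of distinguished triangles. The only cosmetic difference is that the paper produces the left-hand vertical map directly as the canonical factorization through $R\Gamma_Z$ (using that the tensor product with a $Z$-supported factor is $Z$-supported) rather than via TR3.
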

\begin{proof}
The natural map $\CF^{\bullet} \derotimes R\Gamma_Z\CG^{\bullet} \to \CF^{\bullet} \derotimes \CG^{\bullet}$ factors through $R\Gamma_Z(\CF^{\bullet} \derotimes R\Gamma_Z\CG^{\bullet})$ because 
\[
	j^*(\CF^{\bullet} \derotimes R\Gamma_Z\CG^{\bullet}) \cong j^*\CF^{\bullet} \derotimes j^*R\Gamma_Z\CG^{\bullet} \cong 0.
\]
Let $\rho$ denote the composition of the natural isomorphism 
\[
	Rj_*j^*(\CF^{\bullet} \derotimes_{\CO_X} \CG^{\bullet}) \cong Rj_*(j^*\CF^{\bullet} \derotimes_{\CO_X} j^*\CG^{\bullet})
\]
and the isomorphism from the projection formula 
\[
	Rj_*(j^*\CF^{\bullet} \derotimes_{\CO_X} j^*\CG^{\bullet}) \cong \CF^{\bullet} \derotimes_{\CO_X} Rj_*j^*\CG^{\bullet}.
\]
We obtain a morphism of distinguished triangles 
\[
	\xymatrix{
		\CF^{\bullet} \derotimes R\Gamma_Z\CG^{\bullet} \ar[d] \ar[r] & \CF^{\bullet} \derotimes \CG^{\bullet} \ar@{=}[d] \ar[r] & \CF^{\bullet} \derotimes Rj_*j^*\CG^{\bullet} \ar[d]_-{\sim}^-{\rho} \\
		R\Gamma_Z(\CF^{\bullet} \derotimes \CG^{\bullet}) \ar[r] & \CF^{\bullet} \derotimes \CG^{\bullet} \ar[r] & Rj_*j^*(\CF^{\bullet} \derotimes \CG^{\bullet}).
	}
\]
Therefore, the left vertical arrow is an isomorphism. Analogously, one shows that 
\[
	R\Gamma_Z\CF^{\bullet} \derotimes \CG^{\bullet} \cong R\Gamma_Z(\CF^{\bullet} \derotimes \CG^{\bullet}).
\]
\end{proof}
Finally, for an open immersion $j\colon X \to \LX$, we study the connection between $R\Gamma_Z$ and $R\Gamma_{\overline{Z}}$, where $\overline{Z}$ is the closure of $Z$ in $\LX$.
\begin{definition}
Let $Z$ and $Z'$ be closed subsets of a scheme $X$. We let $D_{\qc}(X)_Z^{Z'}$ denote the full subcategory of $D_{\qc}(X)_Z$ of complexes $\CF^{\bullet}$ with $R\Gamma_{Z'} \cong 0$.
\end{definition}
\begin{proposition} \label{opensupport}
Let $j\colon X \to \LX$ be an open immersion of schemes, $Z \subseteq X$ a closed subset and $\overline{Z}$ the closure of $Z$ in $\LX$. The functors $Rj_*$ and $j^*$ restrict to inverse equivalences 
\[
	\xymatrix{
     D_{\qc}(X)_Z \ar@<.5ex>[r]^-{Rj_*} & D_{\qc}(\LX)_{\overline{Z}}^{\overline{Z} \backslash X}. \ar@<.5ex>[l]^-{j^*}
   }
\] 
\end{proposition}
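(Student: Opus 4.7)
The plan is to combine the standard isomorphism $j^* Rj_* \cong \id$ (valid because $j$ is an open immersion) with the fundamental triangle applied to the closed subset $W := \LX \setminus X \subseteq \LX$. Four things must be verified: that $Rj_*$ lands in $D_{\qc}(\LX)_{\overline{Z}}^{\overline{Z}\setminus X}$, that $j^*$ lands in $D_{\qc}(X)_Z$, and that the unit and counit of the adjunction $(j^*, Rj_*)$ are isomorphisms on these subcategories.

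For $\CF^\bullet \in D_{\qc}(X)_Z$, the complex $Rj_*\CF^\bullet$ has quasi-coherent cohomology by the standard fact for open immersions of Noetherian schemes. Using $\overline{Z} \cap X = Z$, we have $(\LX\setminus\overline{Z}) \cap X = X \setminus Z$, so flat base change along the open immersion $\LX\setminus\overline{Z} \hookrightarrow \LX$ identifies the restriction of $Rj_*\CF^\bullet$ to $\LX\setminus\overline{Z}$ with $R(j|_{X\setminus Z})_*(\CF^\bullet|_{X\setminus Z}) = 0$; hence $Rj_*\CF^\bullet$ is supported on $\overline{Z}$. Next, $\overline{Z}\setminus X = \overline{Z}\cap W$ is closed in $\LX$, and the fundamental triangle for $W$ applied to $Rj_*\CF^\bullet$, combined with $j^*Rj_* \cong \id$, forces $R\Gamma_W Rj_*\CF^\bullet = 0$. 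Since $\overline{Z}\setminus X \subseteq W$, the underived equality $\Gamma_{\overline{Z}\setminus X} = \Gamma_{\overline{Z}\setminus X} \circ \Gamma_W$ lifts to $R\Gamma_{\overline{Z}\setminus X} \cong R\Gamma_{\overline{Z}\setminus X}\circ R\Gamma_W$ (using that $\Gamma_W$ preserves injectivity, by the earlier lemma), so $R\Gamma_{\overline{Z}\setminus X}Rj_*\CF^\bullet = 0$ as required. Conversely, for $\CG^\bullet \in D_{\qc}(\LX)_{\overline{Z}}^{\overline{Z}\setminus X}$, the pullback $j^*\CG^\bullet$ is quasi-coherent and supported on $\overline{Z}\cap X = Z$, since $X\setminus Z \subseteq \LX\setminus\overline{Z}$.

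The unit $\id \to j^*Rj_*$ is an isomorphism on all of $D_{\qc}(X)$ by the open immersion property. For the counit, the fundamental triangle for $W$ applied to $\CG^\bullet \in D_{\qc}(\LX)_{\overline{Z}}^{\overline{Z}\setminus X}$ reads
\[
R\Gamma_W\CG^\bullet \longrightarrow \CG^\bullet \longrightarrow Rj_*j^*\CG^\bullet \longrightarrow R\Gamma_W\CG^\bullet[1].
\]
Since $\CG^\bullet \cong R\Gamma_{\overline{Z}}\CG^\bullet$ (by the corollary to the fundamental triangle above) and the underived identity $\Gamma_W \circ \Gamma_{\overline{Z}} = \Gamma_{W\cap\overline{Z}} = \Gamma_{\overline{Z}\setminus X}$ again lifts to the derived level, we obtain $R\Gamma_W\CG^\bullet \cong R\Gamma_W R\Gamma_{\overline{Z}}\CG^\bullet \cong R\Gamma_{\overline{Z}\setminus X}\CG^\bullet = 0$. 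Hence the arrow $\CG^\bullet \to Rj_*j^*\CG^\bullet$ is an isomorphism, completing the proof.

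The only non-routine point is the handling of the two derived identities $R\Gamma_A \cong R\Gamma_A \circ R\Gamma_B$ for $A \subseteq B$ and $R\Gamma_A \circ R\Gamma_B \cong R\Gamma_{A \cap B}$ used above; both reduce, via the Grothendieck composition theorem, to their underived analogues once one notes that $\Gamma_B$ preserves injectives (as established in the earlier lemma).
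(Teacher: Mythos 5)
Your proof is correct and takes essentially the same approach as the paper's: flat base change along the open complement of $\overline{Z}$ to see that $Rj_*$ lands in complexes supported on $\overline{Z}$, the triangle-identity (retraction) trick to kill local cohomology along the boundary, and the fundamental triangle for $\LX \setminus X$ together with $R\Gamma_{\LX \setminus X}\circ R\Gamma_{\overline{Z}} \cong R\Gamma_{\overline{Z}\setminus X}$ to show $\CG^{\bullet} \to Rj_*j^*\CG^{\bullet}$ is an isomorphism. The only cosmetic differences are that the paper obtains $R\Gamma_{\overline{Z}\setminus X}Rj_* = 0$ by factoring $j$ through the open subset $(\LX\setminus\overline{Z})\cup X$ and applying the retraction trick to that immersion directly, rather than via your composition identity $R\Gamma_{\overline{Z}\setminus X}\cong R\Gamma_{\overline{Z}\setminus X}\circ R\Gamma_{\LX\setminus X}$, and that you have ``unit'' and ``counit'' interchanged in the last paragraph (for $j^*\dashv Rj_*$ the counit is $j^*Rj_*\to\id$ and the unit is $\id\to Rj_*j^*$).
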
 		
\begin{proof}
Let $u\colon U \to X$ and $u'\colon U' \to \LX$ denote the open immersions of the complements $U$ of $Z$ in $X$ and $U'$ of $\overline{Z}$ in $\LX$. Let $j'$ be the restriction of $j$ to $U$. We obtain a cartesian square
\[
	\xymatrix{
		U \ar[r]^-u \ar[d]^-{j'} & X \ar[d]^-j \\
		U' \ar[r]^-{u'} & \LX.
	}
\]
The natural isomorphism $u'^*Rj_* \overset{\bc}{\longrightarrow} Rj'_*u^*$ shows that the essential image of $D_{\qc}(X)_Z$ under $Rj_*$ is a subcategory of $D_{\qc}(\LX)_{\overline{Z}}$. The inclusion $j$ factors through the open immersions $\sigma\colon X \to U' \cup X$ and $\tau\colon U' \cup X \to \LX$. In particular, we have a natural isomorphism $Rj_* \cong R\tau_* R\sigma_*$. Since the composition
\[
	R\tau_* R\sigma_* \xrightarrow{\id \to R\tau_* \tau^*} R\tau_* \tau^* R\tau_* R\sigma_* \xrightarrow{\tau^* R\tau_* \to \id} R\tau_* R\sigma_*
\]
is the identity and the second morphism is an isomorphism, the first map is an isomorphism too. As $\tau$ is the open immersion of the complement of $\overline{Z} \backslash X$ into $\LX$, it follows from the distinguished triangle 
\[
	R\Gamma_{\overline{Z} \backslash X} \longrightarrow \id \longrightarrow R\tau_* \tau^* \longrightarrow R\Gamma_{\overline{Z} \backslash X}[1]
\]
that $R\Gamma_{\overline{Z} \backslash X} Rj_* j^* = 0$. 

The adjunction morphism $j^*Rj_* \to \id$ is always an isomorphism. It remains to show that the natural map $\id \to Rj_*j^*$ is an isomorphism. For every $\CF^{\bullet}$ in $D_{\qc}(\LX)_{\overline{Z}}^{\overline{Z} \backslash X}$, we have $\CF^{\bullet} \cong R\Gamma_{\overline{Z}} \CF^{\bullet}$ and $R\Gamma_{\overline{Z} \backslash X} \CF^{\bullet} \cong 0$. It follows that 
\begin{align*} 
	R\Gamma_{\LX \backslash X}\CF^{\bullet} &\cong R\Gamma_{\LX \backslash X}R\Gamma_{\overline{Z}}\CF^{\bullet} \\
	&\cong R\Gamma_{\overline{Z} \backslash X}\CF^{\bullet} \\
	&\cong 0.
\end{align*}
Thus the second morphism in the fundamental triangle
\[
	R\Gamma_{\LX \backslash X}\CF^{\bullet} \longrightarrow \CF^{\bullet} \longrightarrow Rj_* j^*\CF^{\bullet} \longrightarrow R\Gamma_{\LX \backslash X}\CF^{\bullet}[1]
\]
is an isomorphism.
\end{proof}
\begin{remark}
In the standard reference \cite[Corollary II.5.11]{HartshorneRD}, Hartshorne proves that for a morphism $f\colon X \to Y$ of schemes, the functor $Lf^*$ from $D_c^-(Y)$ to $D_c^-(X)$ is left adjoint to the functor $Rf_*$ from $D^+(X)$ to $D^+(Y)$. One the one hand, we can relax the coherence assumption because in the case of an open immersion, which is a flat morphism, $f^*$ is exact. On the other hand, Proposition (3.2.1) of the more recent reference \cite{LipmanGrothDual} shows this adjunction generally for ringed spaces and without any boundedness or (quasi-)coherence assumptions on the complexes.  
\end{remark}
\begin{corollary} \label{Gammajung}
If $Z$ is a closed subset of a scheme $X$ and $j\colon X \to \LX$ is an open immersion such that the image of $Z$ in $\LX$ is closed, then there is a natural isomorphism of functors
\[
	\epsilon\colon R\Gamma_Z \overset{\sim}{\longrightarrow} Rj_*R\Gamma_Zj^*.
\]
\end{corollary}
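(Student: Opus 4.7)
The plan is to deduce the corollary directly from \autoref{opensupport} together with part (b) of \autoref{Gammacomm}. First I would observe that the hypothesis "$j(Z)$ is closed in $\LX$" means exactly that the closure $\overline{Z}$ of $Z$ in $\LX$ equals $j(Z)$; in particular $\overline{Z} \backslash X = \emptyset$, so the auxiliary support condition in the subcategory $D_{\qc}(\LX)_{\overline{Z}}^{\overline{Z} \backslash X}$ of \autoref{opensupport} becomes vacuous and the equivalence takes the form
\[
    D_{\qc}(X)_Z \xleftrightarrow{\ \ } D_{\qc}(\LX)_{\overline{Z}}.
\]

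Next, for any $\CF^{\bullet} \in D_{\qc}(\LX)$ the object $R\Gamma_Z \CF^{\bullet}$ lies in $D_{\qc}(\LX)_{\overline{Z}}$, so \autoref{opensupport} applied to it gives a natural isomorphism
\[
    R\Gamma_Z \CF^{\bullet} \overset{\sim}{\longrightarrow} Rj_* j^* R\Gamma_Z \CF^{\bullet}
\]
coming from the unit of the adjunction $(j^*, Rj_*)$. Finally, since $j$ is flat and $j^{-1}(j(Z)) = Z$ as subsets of $X$, part (b) of \autoref{Gammacomm} supplies a natural isomorphism $j^*R\Gamma_Z \cong R\Gamma_Z j^*$; substituting this in produces the desired
\[
    \epsilon\colon R\Gamma_Z \overset{\sim}{\longrightarrow} Rj_* R\Gamma_Z j^*.
\]

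No step is technically demanding; the only point that requires a moment's thought is recognizing that the closedness assumption on $j(Z)$ is precisely what is needed to kill the extra condition $R\Gamma_{\overline{Z} \backslash X} \cong 0$ in \autoref{opensupport}, so that the equivalence can be invoked on all of $D_{\qc}(\LX)_{\overline{Z}}$ without further hypotheses.
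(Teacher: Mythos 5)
Your proof is correct and takes essentially the same route as the paper's: the paper also defines $\epsilon$ as the unit $R\Gamma_Z \to Rj_*j^*R\Gamma_Z$, invertible by \autoref{opensupport}, followed by the natural isomorphism $Rj_*j^*R\Gamma_Z \cong Rj_*R\Gamma_Zj^*$ from \autoref{Gammacomm}(b). The only difference is that you make explicit why the closedness of $j(Z)$ renders the auxiliary condition defining $D_{\qc}(\LX)_{\overline{Z}}^{\overline{Z}\setminus X}$ vacuous, a point the paper leaves implicit.
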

\begin{proof}
We define $\epsilon$ as the composition of the natural map $R\Gamma_Z \to Rj_*j^*R\Gamma_Z$, which is an isomorphism by \autoref{opensupport}, and the natural isomorphism $Rj_*j^*R\Gamma_Z \overset{\sim}{\longrightarrow} Rj_*R\Gamma_Zj^*$.
\end{proof} 
For example, the condition that $Z$ is also closed in $\LX$ is satisfied if $j\colon X \to \LX$ is an open immersion of $Y$-schemes and $i\colon Z \to X$ is a closed immersion of $Y$-schemes over some base scheme $Y$ such that the structural morphisms $Z \to Y$ and $\LX \to Y$ are proper, see \autoref{ExerciseHartshorne}. When constructing the generalized trace map, we will be exactly in this situation. 
\begin{lemma} \label{opennattrans}
Let $j\colon X \to \LX$ be an open immersion. Let $Z \subset X$ be a closed subset such that $j(Z)$ is closed in $\overline{Z}$. Then for $\CF^{\bullet} \in D_{\qc}^-(X)_Z$ and $\CG^{\bullet} \in D_{\qc}^+(X)$, the natural transformation
\[
	\tau\colon Rj_* \RSHom_{\CO_X}^{\bullet}(\CF^{\bullet},\CG^{\bullet}) \to \RSHom_{\CO_{\LX}}^{\bullet}(Rj_*\CF^{\bullet},Rj_*\CG^{\bullet})
\]
is a functorial isomorphism.  
\end{lemma}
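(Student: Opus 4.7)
Here is my plan.

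First, I would exploit the geometric hypothesis: $j(Z)$ is dense in its closure $\overline{Z}$ and, by assumption, closed in $\overline{Z}$, hence $j(Z) = \overline{Z}$. In particular $\overline{Z}$ is contained in $j(X)$ and disjoint from the closed complement $V := \LX \setminus j(X)$. By \autoref{opensupport}, $Rj_*\CF^{\bullet}$ is supported on $\overline{Z}$, so its stalks at points of $V$ vanish.

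Abbreviate $\mathcal{S} := Rj_*\RSHom_{\CO_X}^{\bullet}(\CF^{\bullet},\CG^{\bullet})$ and $\mathcal{T} := \RSHom_{\CO_{\LX}}^{\bullet}(Rj_*\CF^{\bullet},Rj_*\CG^{\bullet})$, so $\tau\colon \mathcal{S} \to \mathcal{T}$. The strategy is to establish (i) that $j^*\tau$ is the canonical identification of both sides with $\RSHom_{\CO_X}^{\bullet}(\CF^{\bullet},\CG^{\bullet})$, and (ii) that both $\mathcal{S}$ and $\mathcal{T}$ lie in the essential image of $Rj_*$, i.e.\ the unit $\id \to Rj_*j^*$ is an isomorphism on each. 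Once (i) and (ii) are secured, applying $Rj_*$ to $j^*\tau$ and comparing with the naturality square of the unit gives a commutative diagram in which three edges are isomorphisms, forcing $\tau$ itself to be one. For (i) one applies $j^*$ and uses $j^*Rj_* \cong \id$ together with the compatibility of flat pullback with $\RSHom$; the resulting endomorphism of $\RSHom^{\bullet}_{\CO_X}(\CF^{\bullet},\CG^{\bullet})$ is the identity by the very construction of $\tau$ from the functoriality of $Rj_*$. The $\mathcal{S}$-case of (ii) is immediate from the full faithfulness of $Rj_*$.

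The main obstacle is the $\mathcal{T}$-case of (ii), i.e.\ showing $R\Gamma_V\mathcal{T} = 0$. My plan here is the following: for any $\CC \in D_{\qc}(\LX)_V$, the supports of $\CC$ and of $Rj_*\CF^{\bullet}$ lie in the disjoint sets $V$ and $\overline{Z}$, so a stalkwise computation yields $\CC \derotimes_{\CO_{\LX}} Rj_*\CF^{\bullet} = 0$. The derived tensor--Hom adjunction then gives
\[
	\Hom(\CC,\mathcal{T}) \cong \Hom(\CC \derotimes_{\CO_{\LX}} Rj_*\CF^{\bullet}, Rj_*\CG^{\bullet}) = 0.
\]
Specializing to $\CC = R\Gamma_V\mathcal{T}$ (which does lie in $D_{\qc}(\LX)_V$) and invoking \autoref{Gammaadj} yields
\[
	\Hom(R\Gamma_V\mathcal{T}, R\Gamma_V\mathcal{T}) \cong \Hom(R\Gamma_V\mathcal{T}, \mathcal{T}) = 0,
\]
so the identity of $R\Gamma_V\mathcal{T}$ vanishes and hence $R\Gamma_V\mathcal{T} \cong 0$. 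Together with (i) and the $\mathcal{S}$-case of (ii), this completes the argument.
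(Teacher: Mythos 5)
Your proposal is correct, but it takes a genuinely different route from the paper. The paper's proof is a short diagram chase: it factors $\tau$ through the isomorphism $Rj_*\RSHom_{\CO_X}^{\bullet}(\CF^{\bullet},\CG^{\bullet}) \cong Rj_*\RSHom_{\CO_X}^{\bullet}(j^*Rj_*\CF^{\bullet},\CG^{\bullet})$ coming from the counit $j^*Rj_*\to\id$, followed by the sheafified adjunction isomorphism $Rj_*\RSHom_{\CO_X}^{\bullet}(j^*\CA^{\bullet},\CG^{\bullet}) \cong \RSHom_{\CO_{\LX}}^{\bullet}(\CA^{\bullet},Rj_*\CG^{\bullet})$ of \cite[Proposition (3.2.3)]{LipmanGrothDual} applied to $\CA^{\bullet}=Rj_*\CF^{\bullet}$; the triangle identity for unit and counit then identifies this composite with $\tau$. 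You instead localize: you check that $j^*\tau$ is the canonical identification and that the unit $\id\to Rj_*j^*$ is invertible on both source and target, the only substantial point being the vanishing $R\Gamma_V\RSHom_{\CO_{\LX}}^{\bullet}(Rj_*\CF^{\bullet},Rj_*\CG^{\bullet})=0$ for $V=\LX\setminus j(X)$, which you obtain from tensor--Hom adjunction together with the disjointness of $V$ and $\overline{Z}=j(Z)$ and the idempotence trick via \autoref{Gammaadj}. Both arguments are sound. What your approach buys is that it makes visible exactly where the support hypothesis on $\CF^{\bullet}$ enters, and it avoids invoking Lipman's sheafified adjunction; the cost is length and the need to actually carry out your step (i), i.e.\ the verification that $j^*\tau$ becomes the identity under the canonical identifications --- a routine unwinding of the definition of $\tau$ via evaluation maps and the monoidality of $j^*$, but one you should write out rather than assert. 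One small repair: $\RSHom_{\CO_{\LX}}^{\bullet}(Rj_*\CF^{\bullet},Rj_*\CG^{\bullet})$ need not have quasi-coherent cohomology when $\CF^{\bullet}$ is merely quasi-coherent, so your parenthetical claim that $R\Gamma_V$ of it lies in $D_{\qc}(\LX)_V$ should be weakened to $D(\LX)_V$; this is harmless, since neither the disjoint-support vanishing of the derived tensor product nor \autoref{Gammaadj} (which, as the paper remarks in its proof, holds without quasi-coherence) requires it.
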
 
\begin{proof}
Consider the following diagram
\[
	\xymatrix{
		Rj_*\RSHom_{\CO_X}^{\bullet}(\CF^{\bullet},\CG^{\bullet}) \ar[r]^-{\sim} \ar[d]^{\tau} & Rj_*\RSHom_{\CO_X}^{\bullet}(j^*Rj_*\CF^{\bullet},\CG^{\bullet}) \ar[d]^{\tau} \\
		\RSHom_{\CO_{\LX}}^{\bullet}(Rj_*\CF^{\bullet},Rj_*\CG^{\bullet}) \ar[r]^-{\sim} \ar@{=}[dr] & \RSHom_{\CO_{\LX}}^{\bullet}(Rj_*j^*Rj_*\CF^{\bullet},Rj_*\CG^{\bullet}) \ar[d] \\
		& \RSHom_{\CO_{\LX}}^{\bullet}(Rj_*\CF^{\bullet},Rj_*\CG^{\bullet}),
	}
\]
where the horizontal arrows are induced by the counit $j^*Rj_* \to \id$ of adjunction -- these maps are isomorphisms by \autoref{Gammajung} since $\CF^{\bullet}$ is supported in $Z$ -- and the arrow to the bottom right corner stems from the unit $\id \to Rj_*j^*$ of adjunction. The upper square commutes because of the functoriality of $\tau$. The triangle on the bottom commutes because the composition of the unit and counit of an adjunction in the manner of the diagram is canonically isomorphic to the identity. Hence the whole diagram is commutative. Finally, the composition of the two vertical arrows on the right is an isomorphism (\cite[Proposition (3.2.3)]{LipmanGrothDual}. It follows that the vertical arrow on the left is an isomorphism.    
\end{proof}

\section{Generalization of the trace map} 

The adjunction between $Rf_*$ and $f^!$ for a proper morphism $f\colon X \to Y$ is based on the \emph{trace map}, which is a natural transformation of functors
\[
	\tr_f\colon Rf_*f^! \to \id.
\]
The first step of the classical way to construct the trace is to define it for residual complexes. If $Y$ is regular, the structure sheaf $\CO_Y$ is a dualizing sheaf, and hence in particular a pointwise dualizing complex. Its Cousin complex $K^{\bullet} := E^{\bullet}(\CO_Y)$, see \cite[IV.2]{HartshorneRD}, is an injective resolution of $\CO_Y$ and an example for a \emph{residual complex}. We recall the basic facts from chapter 3.2 of \cite{conrad_grothendieck_2000}.
 	
For every morphism $g\colon X \to Y$ of finite type, one can construct a functor $g^{\Delta}$ mapping residual complexes on $Y$ to residual complexes on $X$ by gluing the functors $g^{\flat}$ for finite $g$ and $g^{\sharp}$ for separated and smooth $g$. This gives rise to the \emph{twisted} or \emph{exceptional inverse image functor}:
\begin{definition}
Let $g\colon X \to Y$ be a morphism of finite type. We define the functor $g^!\colon D_{\coh}^+(Y) \to D_{\coh}^+(X)$ by
\[
	g^! = D_{g^{\Delta}K^{\bullet}} \circ Lg^* \circ D_{K^{\bullet}},
\]
where $D$ is the duality.
\end{definition}
For proper $f$, we can define a map of complexes $\tr_f(K^{\bullet})\colon f_*f^{\Delta}K^{\bullet} \to K^{\bullet}$ (\cite[VII, Theorem 2.1]{HartshorneRD}), where $f^{\Delta}$ is the functor $f^!$ for residual complexes, see \cite[VI.3.]{HartshorneRD}. With this map in hand one defines the natural transformation $\tr_f\colon Rf_*f^! \to \id$ in the category $D_{\coh}^+(Y)$ as the unique map making the diagram
\begin{align*} 
	\xymatrix{
		Rf_*f^! \ar@{=}[r] \ar@{.>}[dd]^{\tr_f}& Rf_*\RSHom_{\CO_X}^{\bullet}(Lf^* \circ D_{K^{\bullet}}(\usc),f^{\Delta}K^{\bullet}) \ar[d]^{\sim}\\
		& \RSHom_{\CO_Y}^{\bullet}(D_{K^{\bullet}}(\usc),f_*f^{\Delta}K^{\bullet}) \ar[d]^{\tr_f(K^{\bullet})} \\
		\id \ar[r]^-{\sim} & \RSHom_{\CO_Y}^{\bullet}(D_{K^{\bullet}}(\usc),K^{\bullet})
	}
\end{align*}
commutative. Here the first vertical isomorphism on the right is the natural isomorphism from the adjunction of $Rf_*$ and $Lf^*$. Note that $f^{\Delta}(K^{\bullet})$ is injective, hence $f_*f^{\Delta}(K^{\bullet})$ computes $Rf_*f^{\Delta}(K^{\bullet})$.

Instead of constructing the twisted inverse image functor $f^!$ by pasting it from special situations such as smooth and proper maps, Lipman uses a more abstract method, the Special Adjoint Functor Theorem, to obtain a right adjoint of $Rf_*$ under weak assumptions on the morphism $f$. Then he extends this result to a ``sheafified duality'', i.e.\ for $\CF^{\bullet} \in D_{\qc}(X)$, $\CG^{\bullet} \in D_{\qc}^+(Y)$ and quasi-proper $f$, a natural isomorphism
\[
	Rf_*\RSHom_{\CO_X}^{\bullet}(\CF^{\bullet},f^!\CG^{\bullet}) \to \RSHom_{\CO_Y}^{\bullet}(Rf_*\CF^{\bullet},\CG^{\bullet}). 
\]
The compatibility of the approaches of \cite{HartshorneRD} and \cite{LipmanGrothDual} is involved, as pointed out in the introduction of \cite{LipmanGrothDual}.
  
Let us recall some results of the trace for proper morphisms.
\begin{definition}[\protect{{\cite[VI. 5.]{HartshorneRD}}}]
A morphism $f\colon X \to Y$ of schemes is called \emph{residually stable} if it is flat, integral and the fibers of $f$ are Gorenstein.
\end{definition}
\begin{lemma}[\protect{{\cite[Corollary 4.4.3]{LipmanGrothDual}}}] \label{flatcommute}
Let $f\colon X \to Y$ be proper and let $g\colon Y' \to Y$ be flat. Let $f'$ and $g'$ be the projections of $X \times_Y Y'$ such that the square
\[
	\xymatrix{
		Y' \times_Y X \ar[d]^-{f'} \ar[r]^-{g'} & X \ar[d]^-f \\
		Y' \ar[r]^-g & Y
	}
\]
is cartesian. The morphism $\beta\colon g'^*f^! \overset{\sim}{\longrightarrow} f'^!g^*$, defined as the adjoint of the composition
\[
	Rf'_*g'^*f^! \xrightarrow{\bc^{-1} f^!} g^*Rf_*f^! \xrightarrow{g^*\tr_f} g^*,
\]
is an isomorphism. Here $\bc$ denotes the base change isomorphism (\autoref{bchange}).
\end{lemma}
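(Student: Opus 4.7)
The plan is to unwind the definition of $f^!$ and $f'^!$ via residual complexes and to compose flat base change isomorphisms for each building block. Since properness is stable under base change, $f'$ is proper; moreover $g^* K^{\bullet}$ is a residual complex on $Y'$ whenever $K^{\bullet}$ is one on $Y$, so by definition
\[
	f'^! = D_{f'^{\Delta} g^* K^{\bullet}} \circ Lf'^* \circ D_{g^* K^{\bullet}}.
\]

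Three flat base change compatibilities should then assemble into a natural isomorphism $g'^* f^! \cong f'^! g^*$: (i) the canonical isomorphism $f'^{\Delta} g^* K^{\bullet} \cong g'^* f^{\Delta} K^{\bullet}$, coming from the fact that both building blocks $(-)^{\flat}$ and $(-)^{\sharp}$ of $(-)^{\Delta}$ commute with flat base change (directly from the defining formulas, the first via $f^{\flat}\CG \cong f^* \SHom_{\CO_Y}(f_*\CO_X, \CG)$ for finite $f$, the second via the relative dualizing module for smooth $f$); (ii) the commutativity $Lf'^* \circ g^* \cong g'^* \circ Lf^*$ of pullbacks in the cartesian square; and (iii) for flat $g$ and a bounded below complex $K^{\bullet}$, the isomorphism $g^* D_{K^{\bullet}} \cong D_{g^*K^{\bullet}} \circ g^*$, an instance of $g^*$ commuting with $\RSHom$ against bounded below targets, with the analogous statement for $g'$. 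Composing (iii), (ii), (i), and another (iii) in the appropriate order converts $f'^! g^* \CG^{\bullet}$ into $g'^* f^! \CG^{\bullet}$.

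The main obstacle is to identify this chain of canonical identifications with the particular morphism $\beta$ defined in the statement. By the Yoneda lemma applied to the adjunction $(Rf'_*, f'^!)$ available for the proper $f'$, it suffices to verify that the $Rf'_*$-adjoint of the composed iso equals $g^* \tr_f \circ \bc^{-1}$. At the level of residual complexes, where $\tr_f$ is a genuine morphism of complexes, this compatibility reduces to the naturality of the trace under flat base change, which is itself a consequence of the base change compatibility of $f^{\Delta}$ used in step (i). A careful diagram chase through the chain of identifications finally pins $\beta$ down as the composed isomorphism, and hence shows that $\beta$ is an isomorphism.
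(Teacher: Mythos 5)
First, a point of comparison: the paper does not prove this lemma at all --- it is quoted verbatim from \cite[Corollary 4.4.3]{LipmanGrothDual}, where it is established for the abstractly defined $f^!$ on $D_{\qc}^+$ via sheafified duality, the projection formula and the flat base-change isomorphism for $Rf_*$. Your plan --- unwinding $f^!$ through residual complexes in the style of \cite{HartshorneRD} --- is therefore a genuinely different route, but as written it contains a real gap. The formula $f'^! = D_{f'^{\Delta}g^*K^{\bullet}} \circ Lf'^* \circ D_{g^*K^{\bullet}}$ presupposes that $g^*K^{\bullet}$ is a residual complex on $Y'$, and this is false for a general flat $g$. Residual complexes pull back only along \emph{residually stable} morphisms (flat, integral, with Gorenstein fibers) --- exactly the restricted class the paper has to impose in \autoref{traceres}. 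For $g$ smooth of relative dimension $d>0$, for instance, $g^*K^{\bullet}$ has the wrong codimension function and is not a Cousin complex; the correct residual pullback is $g^{\sharp}K^{\bullet}$, which differs from $g^*K^{\bullet}$ by a twist by $\omega_{Y'/Y}$, a shift by $d$, and passage to the associated Cousin complex. So your step (i) and the displayed definition of $f'^!$ do not make sense as stated, and repairing them (replacing $g^*K^{\bullet}$ by $g^{\Delta}K^{\bullet}$ everywhere and tracking the resulting comparison isomorphisms) is precisely where the hard content of the residual-complex proof of flat base change is concentrated.

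Two further concerns. Your final paragraph is close to circular: identifying $\beta$ with your chain of canonical isomorphisms requires the compatibility of $\tr_f(K^{\bullet})$ with flat base change at the level of residual complexes, which in \cite{HartshorneRD} is a separately and laboriously proved statement (essentially TRA~4), not a formal consequence of ``the base change compatibility of $f^{\Delta}$ used in step (i)'' --- i.e.\ of the step that is broken. And even once repaired, the residual-complex argument only yields the lemma for $\CG^{\bullet} \in D_{\coh}^+(Y)$ over a base admitting a residual complex, whereas the statement is needed in the paper (in \autoref{proptrace}, \autoref{traceres} and ultimately \autoref{qcohadjunction}) in Lipman's generality, on $D_{\qc}^+$ with no dualizing-complex hypothesis. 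If you want a self-contained argument in the intended generality, the route to follow is Lipman's: use the duality isomorphism of \autoref{adjunctionclassic} for both $f$ and $f'$, together with the base-change isomorphism $\bc$ and the projection formula, to show that the map on $\Hom$-sets induced by $\beta$ is bijective against every test object.
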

Let us recall two compatibilities of the trace, which usually are known as ``TRA 1'' and ``TRA 4''.
\begin{lemma} \label{proptrace}
Let $f\colon X \to Y$ be a proper morphism of schemes.
\begin{enumerate}
	\item (TRA 1) If $g\colon Y \to Z$ is another proper morphism, then there is a commutative diagram
	\[
		\xymatrix{
			R(gf)_*(gf)^! \ar[r]^-{\tr_{g \circ f}} \ar[d]^{\sim} & \id \\
			Rg_*Rf_*f^!g^! \ar[r]^-{\tr_f} & Rg_*g^! \ar[u]_{\tr_g}
		}
	\]
	where the first vertical arrow is the natural isomorphism. 
	\item (TRA 4) For a flat morphism $g\colon Y' \to Y$, there is a commutative diagram
	\[
		\xymatrix@C45pt{
			g^*Rf_*f^! \ar[r]^-{g^* \tr_f} \ar[d]^{\bc}_{\sim} & g^* \\
			Rf'_*g'^*f^! \ar[r]^-{Rf'_*\beta}_-{\sim} & Rf'_*f'^!g^*, \ar[u] _{\tr_{f'}g^*}
		}
	\]
where $g'$ and $f'$ are the two projections of $X \times_Y Y'$.
\end{enumerate}
\end{lemma}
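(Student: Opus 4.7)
The plan is to handle the two parts separately: (b) is essentially tautological from the construction of $\beta$, while (a) requires a reduction to the analogous statement for residual complexes.

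For part (b), TRA 4, I would simply unwind the definition of $\beta$ recalled in \autoref{flatcommute}. By that construction, $\beta \colon g'^{*}f^{!} \to f'^{!}g^{*}$ is the adjoint -- with respect to the adjunction $Rf'_{*} \dashv f'^{!}$, valid because $f'$ is proper -- of the composition $Rf'_{*}g'^{*}f^{!} \xrightarrow{\bc^{-1}f^{!}} g^{*}Rf_{*}f^{!} \xrightarrow{g^{*}\tr_{f}} g^{*}$. Applying $Rf'_{*}$ to $\beta$ and post-composing with the counit $\tr_{f'}g^{*}$ thus reproduces the original map, i.e.
\[
	(\tr_{f'}g^{*}) \circ (Rf'_{*}\beta) = (g^{*}\tr_{f}) \circ (\bc^{-1}f^{!}).
\]
Post-composing both sides with $\bc$ yields exactly the commutativity claimed in (b); no further work is needed.

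For part (a), TRA 1, I plan a two-step lifting argument. The first step is the compatibility on the level of residual complexes: under the canonical isomorphism $(gf)^{\Delta}K^{\bullet} \cong f^{\Delta}g^{\Delta}K^{\bullet}$, one has the identity of maps of complexes
\[
	\tr_{gf}(K^{\bullet}) = \tr_{g}(K^{\bullet}) \circ g_{*}\tr_{f}(g^{\Delta}K^{\bullet}),
\]
which is the ``TRA 1'' of \cite[VII, 2]{HartshorneRD} (see also \cite[\S 3.4]{conrad_grothendieck_2000}). The second step is to transport this equality through the duality diagram defining $\tr$ on $Rf_{*}f^{!}$ that is recalled at the beginning of Section 2. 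Concretely, rewriting each of $\tr_{gf}$, $\tr_f$ and $\tr_g$ via that diagram turns the desired square into a diagram of $\RSHom_{\CO_Z}^{\bullet}$'s, and one then checks commutativity by combining the pseudofunctorialities $R(gf)_{*} \cong Rg_{*}Rf_{*}$ and $L(gf)^{*} \cong Lf^{*}Lg^{*}$ with the compatibility of the adjunction isomorphism
\[
	Rf_{*}\RSHom_{\CO_X}^{\bullet}(Lf^{*}(\usc),\usc) \cong \RSHom_{\CO_Y}^{\bullet}(\usc, Rf_{*}(\usc))
\]
under composition of morphisms.

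The main obstacle is the coherence bookkeeping in the TRA 1 step: although every individual identification (naturality of the trace, functoriality of $\RSHom$, pseudofunctoriality of $Rf_{*}$ and $Lf^{*}$, compatibility of the $Rf_{*} \dashv Lf^{*}$ adjunction with composition) is standard, assembling them into a single commutative rectangle is tedious. Once the residual-complex identity is granted, this becomes a formal exercise, and I would structure it as a verification that each of the several sub-rectangles and triangles commutes either by naturality or by invoking the residual-complex identity above.
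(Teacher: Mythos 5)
Your proposal is correct and matches the paper's argument: for (b) the paper likewise observes that commutativity holds by construction of $\beta$, since $\tr_{f'}g^*\circ Rf'_*\beta$ is by definition the adjoint of the adjoint of $g^*\tr_f\circ\bc^{-1}$. For (a) the paper simply cites \cite[Corollary VII.3.4]{HartshorneRD}, and your residual-complex reduction via TRA~1 of \cite[VII, Theorem 2.1]{HartshorneRD} followed by transport through the defining diagram of $\tr_f$ is precisely the argument underlying that citation.
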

\begin{proof}
(a) is \cite[Corollary VII.3.4]{HartshorneRD}. The diagram in (b) commutes by construction of $\beta$: The composition $\tr_{f'}g^* \circ Rf'_*\beta$ is the adjoint of the adjoint of the composition $u^* \tr_f \circ \bc^{-1}$, see \autoref{flatcommute}. 
\end{proof}
\begin{remark}
Part (b) of the preceding lemma holds under the milder assumption that $f$ is of finite Tor-dimension, see \cite[Corollary 4.4.3]{LipmanGrothDual}. In this more general case one considers the left derived functors $Lf^*$ and $Lf'^*$. However, we will need the compatibility of the trace with pullback only for flat morphisms.
\end{remark} 
From now on we do not assume that $f$ is proper. We are interested in the case where $f\colon X \to Y$ is a separated morphism of finite type of Noetherian schemes and $i\colon Z \to Y$ and $i'\colon Z' \to X$ are closed immersions with a proper morphism $f'\colon Z' \to Z$ such that $f \circ i' = i \circ f'$. The compactification theorem of Nagata (\cite{Nagata}, see also \cite{Luet} for a more recent proof) states that there exists a factorization of $f$ into an open immersion $j\colon X \to \LX$ and a proper morphism $\overline{f} \colon \LX \to Y$.
\begin{lemma} \label{ExerciseHartshorne}
Let $f\colon X \to Y$ be a morphism of schemes that factors through an open immersion $j\colon X \to \LX$ followed by a proper morphism $\overline{f}\colon \LX \to Y$. Then for every closed immersion $i\colon Z \to X$ such that $f \circ i$ is proper, the composition $j \circ i$ is also a closed immersion. 
\end{lemma}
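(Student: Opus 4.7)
The plan is to exhibit $j \circ i$ as a proper immersion and then conclude that any such morphism is automatically a closed immersion. First, observe that $\overline{f} \circ (j \circ i) = (\overline{f} \circ j) \circ i = f \circ i$, so $j \circ i\colon Z \to \LX$ is a morphism of $Y$-schemes (where $Z$ is viewed as a $Y$-scheme via $f \circ i$ and $\LX$ via $\overline{f}$). Moreover, $j \circ i$ is an immersion: it factors as the closed immersion $i\colon Z \to X$ followed by the open immersion $j\colon X \to \LX$, realizing $Z$ as a closed subscheme of the open subscheme $j(X) \subseteq \LX$, hence as a locally closed subscheme of $\LX$.

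The key step is to show that $j \circ i$ is proper. For this I would invoke the standard fact that if $g\colon W \to Y$ is proper and $h\colon V \to Y$ is separated, then every $Y$-morphism $\phi\colon W \to V$ is proper. The proof factors $\phi$ through its graph $\Gamma_\phi\colon W \to W \times_Y V$, which is a closed immersion because $h$ is separated, followed by the second projection $W \times_Y V \to V$, which is proper as the base change of $g$ along $h$. Applied to $W = Z$, $V = \LX$, $g = f \circ i$ (proper by hypothesis) and $h = \overline{f}$ (proper, hence separated), this yields that $j \circ i$ is proper.

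Finally, a proper immersion is automatically a closed immersion: properness forces the image $(j \circ i)(Z)$ to be closed in $\LX$ because proper morphisms are universally closed, while the structure sheaf surjectivity is already built into being an immersion; together these give a closed immersion. I do not expect any real obstacle; the argument is a direct application of standard properties of proper and separated morphisms, with the graph-factorization trick being the only nontrivial (but well-known) input.
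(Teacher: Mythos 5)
Your proof is correct and follows essentially the same route as the paper: both establish that $j\circ i$ is proper via the cancellation property ($\overline{f}\circ(j\circ i)=f\circ i$ is proper and $\overline{f}$ is separated, which is \cite[Corollary II.4.8]{Hartshorne}), and then conclude that the image is closed. You merely spell out the graph-factorization proof of the cancellation lemma, which the paper cites as a reference.
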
 
\begin{proof}
We have to show that $j(i(Z))$ is closed in $\LX$ (which is a special case of the first part of exercise II.4.4 of \cite{Hartshorne}). By assumption the composition $\overline{f} \circ j \circ i = f \circ i$ is proper and $\overline{f}$ is proper, in particular $\overline{f}$ is separated. Hence by \cite[Corollary II.4.8]{Hartshorne}, $j \circ i$ is proper, which implies that the image $j(i(Z))$ is closed.
\end{proof}
The following generalization of the trace map stems from \cite{Ruelling}, where Chatzistamatiou and R\"ulling define a trace for morphisms which are proper not only over a single closed subset but along a family of supports. Our construction is similar to the morphism $\operatorname{Tr}_f$ from Corollary 1.7.6 of ibid. 
\begin{definition} \label{tracedef}
For a morphism $f\colon X \to Y$ of finite type and closed immersions $i\colon Z \to Y$ and $i'\colon Z' \to X$ with a proper morphism $f'\colon Z' \to Z$ such that $f \circ i' = i\circ f'$, choose a compactification, i.e.\ an open immersion $j\colon X \to \LX$ and a proper morphism $\overline{f}\colon \LX \to X$ with $\overline{f} \circ j = f$. We obtain the following commutative diagram:
\[
	\xymatrix{
		Z' \ar@^{(->}[r]^-{i'} \ar[d]^-{f'} & X \ar[r]^-j \ar[d]^-f & \LX \ar[dl]^{\overline{f}} \\
		Z \ar@^{(->}[r]^-i & Y &
	}
\]
We define the trace of $f$ as the morphism of functors 
\[
	\tr_{f,Z} = \tr_f\colon Rf_*R\Gamma_{Z'}f^! \to \id 
\]
on $D_{\qc}^+(\CO_Y)$ given by the composition 
\begin{align*} \label{trace}
	Rf_*R\Gamma_{Z'}f^! \overset{\sim}{\longrightarrow} R\overline{f}_*Rj_*R\Gamma_{Z'}j^*\overline{f}^! \xrightarrow{R\overline{f}_*\epsilon^{-1}\overline{f}^!} R\overline{f}_*R\Gamma_{Z'}\overline{f}^! \xrightarrow{R\Gamma_{Z'} \to \id}R\overline{f}_*\overline{f}^! \xrightarrow{tr_{\overline{f}}} \id,
\end{align*}
where $\epsilon$ is the isomorphism of \autoref{Gammajung} and the last morphism is the classical Grothendieck-Serre trace for the proper map $\overline{f}$. 
\end{definition} 
Because $Rf_*R\Gamma_{Z'}f^! \cong R\Gamma_ZRf_*f^!$ (\autoref{Gammacomm}), the complex $Rf_*R\Gamma_{Z'}f^!$ is supported on $Z$. By \autoref{Gammaadj}, $\tr_f$ factors through $R\Gamma_Z$, i.e.\ there is a commutative diagram
\[
	\xymatrix{
		Rf_*R\Gamma_{Z'}f^! \ar[rr]^-{\tr_f} \ar[dr]_-{\widetilde{\tr}_f} & & \id, \\
		& R\Gamma_Z \ar[ur] &
	}
\]
where $\widetilde{\tr}_f$ is induced by $\tr_f$ and the map $R\Gamma_Z \to \id$ is the natural one. We will not distinguish between $\widetilde{\tr}_f$ and $\tr_f$. For a residual complex $E^{\bullet}$, the trace defined above is a \emph{morphism of complexes} because $f^{\Delta}E^{\bullet}$ and $\overline{f}^{\Delta}E^{\bullet}$ are residual complexes and $\Gamma_Z$ preserves injectives.  

Of course we have to show that $\tr_f$ is well-defined, i.e.\ it does not depend on the choice of a compactification. The next lemma prepares the proof of this independence. 
\begin{lemma} \label{opentrace}
Let $f\colon X \to Y$ be an open immersion. Let $Z \subseteq X$ be a closed subset such that $f(Z)$ is closed in $Y$. Then for every compactification $X \xrightarrow{j} \LX \xrightarrow{\overline{f}} Y$, the map $\tr_f$ equals the inverse $Rf_*R\Gamma_Zf^* \cong R\Gamma_Z$ of the isomorphism of \autoref{Gammajung} followed by the natural morphism $R\Gamma_Z \to \id$.  
\end{lemma}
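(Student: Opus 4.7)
The plan is to unfold both $\tr_f$ and the claimed expression and verify their equality by a direct diagram chase. First, I would verify that $j(Z)$ is closed in $\LX$, so that $\epsilon_j$ of \autoref{Gammajung} enters meaningfully into the construction of $\tr_f$. Since $f$ is an open immersion and $f(Z)$ is closed in $Y$, the composition $f\circ i_Z\colon Z\to Y$ factors as the isomorphism $Z\xrightarrow{\sim}f(Z)$ followed by the closed immersion $f(Z)\hookrightarrow Y$, and is therefore itself a closed immersion, hence proper. Applying \autoref{ExerciseHartshorne} to the factorization $Z\hookrightarrow X\xrightarrow{j}\LX\xrightarrow{\overline{f}}Y$ then gives that $j\circ i_Z$ is a closed immersion, so $j(Z)\subseteq\LX$ is closed.

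Since $\epsilon_f\colon R\Gamma_{f(Z)}\xrightarrow{\sim}Rf_*R\Gamma_Zf^*$ is an isomorphism by \autoref{Gammajung}, the claim is equivalent to showing that $\tr_f\circ\epsilon_f$ equals the natural transformation $R\Gamma_{f(Z)}\to\id$ on $D_{\qc}^+(Y)$. I would expand $\tr_f$ via \autoref{tracedef} and $\epsilon_f$ as the composite of the unit $\eta_f\colon\id\to Rf_*f^*$ with the support-swap isomorphism $f^*R\Gamma_{f(Z)}\cong R\Gamma_Zf^*$. Decomposing $\eta_f=R\overline{f}_*(\eta_j\overline{f}^*)\circ\eta_{\overline{f}}$ through the compactification and using the identification $j^*\overline{f}^*=f^*=f^!=j^*\overline{f}^!$, the composition $\tr_f\circ\epsilon_f$ unfolds as a chain of morphisms involving $\eta_{\overline{f}}$, $R\overline{f}_*(\eta_j\overline{f}^*)$, the support-swap, $R\overline{f}_*\epsilon_j^{-1}\overline{f}^!$, the natural inclusion $R\overline{f}_*R\Gamma_Z\overline{f}^!\to R\overline{f}_*\overline{f}^!$, and $\tr_{\overline{f}}$.

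The crucial simplification is that $\epsilon_j$ is, by construction in \autoref{Gammajung}, the unit $\eta_jR\Gamma_Z$ post-composed with the support-swap isomorphism. The triangle identity for $(j^*,Rj_*)$ then causes the $R\overline{f}_*(\eta_j)$-component in $\eta_f$ to cancel against its counterpart in $R\overline{f}_*\epsilon_j^{-1}\overline{f}^!$, and the remaining composition reduces, via the naturality of $\tr_{\overline{f}}$ applied to the canonical morphism $R\Gamma_{f(Z)}\to\id$, to the natural transformation $R\Gamma_{f(Z)}\to\id$. The hardest part will be the bookkeeping between $\overline{f}^*$ (arising in the decomposition of $\eta_f$) and $\overline{f}^!$ (used in $\tr_{\overline{f}}$); these agree after $j^*$ but differ on $\LX$ in general, so carefully tracking this identification together with the support-swap isomorphisms will be essential for the cancellations to go through.
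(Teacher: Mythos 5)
Your setup is right: checking via \autoref{ExerciseHartshorne} that $j(Z)$ is closed in $\LX$, and reformulating the claim as $\tr_f\circ\epsilon_f=(R\Gamma_{f(Z)}\to\id)$, both match what the paper needs. But the heart of your argument --- that after decomposing $\eta_f$ through the compactification, the $\eta_j$-component cancels against the $\eta_j$ inside $\epsilon_j^{-1}$ by the triangle identity, and the rest follows from naturality of $\tr_{\overline{f}}$ --- does not go through as stated. The two units you want to cancel live at different objects: the one from $\eta_f$ is $\eta_j$ evaluated at $\overline{f}^*R\Gamma_{f(Z)}\CG^{\bullet}$, while the one inside $\epsilon_j$ is evaluated at $R\Gamma_{j(Z)}\overline{f}^!\CG^{\bullet}$. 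These functors are not identified by anything in play; indeed $\overline{f}^*R\Gamma_{f(Z)}\CG^{\bullet}$ is supported on $\overline{f}^{-1}(f(Z))$, which is in general strictly larger than $j(Z)$, and there is no support-swap for $R\Gamma$ past $\overline{f}^*$ since $\overline{f}$ is not flat. So the cancellation cannot be performed before the discrepancy between $\overline{f}^*$ and $\overline{f}^!$ is resolved, and that discrepancy is not bookkeeping: the identification $f^*\cong j^*\overline{f}^!$ is mediated by the base-change isomorphism $\beta$ of \autoref{flatcommute}, and its compatibility with $\tr_{\overline{f}}$ is exactly the content of TRA~4 (\autoref{proptrace}(b)), a genuine (non-formal) property of the trace that your plan never invokes.

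The paper's proof makes this precise: it interposes the closed set $Z'=\overline{f}^{-1}(f(Z))$ and the auxiliary map $\phi\colon R\overline{f}_*R\Gamma_{j(Z)}\overline{f}^!\to R\overline{f}_*R\Gamma_{Z'}\overline{f}^!\cong R\Gamma_{f(Z)}R\overline{f}_*\overline{f}^!\to R\Gamma_{f(Z)}$ (using \autoref{Gammacomm}(a) and \autoref{Gammaadj}), and then reduces everything to the commutativity of a diagram built from $\ad_f$, $\ad_j$, the base change map $\bc$ for the cartesian square $X\times_Y\LX\rightrightarrows(\LX,X)$, and $\tr_{\overline{f}}$, $\tr_{f'}$; the decisive square there commutes precisely by \autoref{flatcommute}. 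To repair your proof you would need to (i) introduce $R\Gamma_{Z'}$ as the intermediary between $R\Gamma_{j(Z)}$ on $\LX$ and $R\Gamma_{f(Z)}$ on $Y$, and (ii) cite the flat-base-change compatibility of the trace for the open immersion $f$ against the proper map $\overline{f}$. As written, ``naturality of $\tr_{\overline{f}}$'' only lets you commute $\tr_{\overline{f}}$ past morphisms of $D^+_{\qc}(Y)$ such as $R\Gamma_{f(Z)}\to\id$; it says nothing about its interaction with $\eta_{\overline{f}}$, which involves $\overline{f}^*$ rather than $\overline{f}^!$ and therefore cannot even be composed with $\tr_{\overline{f}}$ without $\beta$.
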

\begin{proof}
Let $\alpha\colon R\Gamma_Z \to \id$ denote the canonical morphism of functors. The claim of the lemma is the commutativity of the diagram 
\[
	\xymatrix{
		Rf_* R\Gamma_Z f^* \ar[r]^-{\sim} \ar[d]_{\sim}^{\epsilon^{-1}} & R\overline{f}_* Rj_* R\Gamma_Z j^* \overline{f}^! \ar[d]_{\sim}^{R\overline{f}_* \epsilon^{-1} \overline{f}^!} \\
		R\Gamma_Z \ar[d]^{\alpha} & R\overline{f}_* R\Gamma_Z \overline{f}^! \ar[d]^{R\overline{f}_* \alpha \overline{f}^*} \ar[l]_-{\phi}\\
		\id & R\overline{f}_* \overline{f}^!, \ar[l]_-{\tr_{\overline{f}}}
	}
\]
where $\epsilon$ is the isomorphism of \autoref{Gammajung}. The map $\phi$ can be constructed in the following way: Let $Z'$ be the closed subset $\overline{f}^{-1}(Z)$, which contains $Z$. Then define $\phi$ as the composition 
\[
	R\overline{f}_* R\Gamma_Z \overline{f}^! \to R\overline{f}_*R\Gamma_{Z'}\overline{f}^!\overset{\sim}{\longrightarrow} R\Gamma_Z R\overline{f}_*\overline{f}^! \xrightarrow{\tr_{\overline{f}}} R\Gamma_Z
\]
of canonical transformations obtained from the natural transformation $R\Gamma_Z \to R\Gamma_{Z'}$, the isomorphism of \autoref{Gammacomm} and the trace. 

The commutativity of the bottom square is easy to see. The upper square can be extracted from the following bigger diagram:
\[
	\xymatrix{
		Rf_* R\Gamma_Z f^* \ar[r]^-{\sim} \ar[d]_{\sim} & R\overline{f}_*Rj_* R\Gamma_Z j^*\overline{f}^! \ar[d]_{\sim} \ar[dr]^{\sim} & \\
		R\Gamma_Z Rf_* f^* \ar[r]^-{\sim} & R\Gamma_Z R\overline{f}_* Rj_* j^* \overline{f}^! & R\overline{f}_* R\Gamma_{Z'} Rj_*j^*\overline{f}^! \ar[l]_-{\sim} \\
		R\Gamma_Z \ar[u]_{\ad_f}^{\sim} & R\Gamma_Z R\overline{f}_* \overline{f}^! \ar[u]_{\ad_j}^{\sim} \ar[l]_-{\tr} & R\overline{f}_* R\Gamma_{Z'} \overline{f}^!. \ar[u]_{\ad_j}^{\sim} \ar[l]_-{\sim} 
	}
\]
Here $\ad_f$ and $\ad_j$ denote the units of adjunction as in the proof of \autoref{Gammacomm}. The only part whose commutativity is not obvious is the bottom left square. For this it is enough to show that the diagram 
\begin{align} \label{opendiagram}
	\xymatrix@C50pt{
		& R\overline{f}_* \overline{f}^! \ar[r]^-{\tr} \ar[d]^{\ad_f} \ar[ddl]_{\ad_j} & \id \ar[d]^{\ad_f} \\
		& Rf_* f^* R\overline{f}_* \overline{f}^! \ar[r]^-{Rf_*f^*\tr_{\overline{f}}} \ar[d]^{\bc} & Rf_* f^* \ar@{=}[dd] \\
		R\overline{f}_* Rj_* j^* \overline{f}^! \ar[d]^-{\sim} & Rf_* Rf'_* j^* \overline{f}^! \ar[l]_-{\sim} \ar[d]^{\sim} & \\
		R\overline{f}_* Rj_* f'^* f^* & Rf_* Rf'_* f'^* f^* \ar[l]_-{\sim} \ar[r]^-{Rf_*\tr_{f'}f^*} & Rf_*f^*
	}
\end{align}
commutes. Here the morphism $\bc$ is the base change morphism with respect to the cartesian square
\[
	\xymatrix{
		X' \ar[r]^j \ar[d]^-{f'} & \overline{X} \ar[d]^-{\overline{f}} \\
		X \ar[r]^f & Y.
	}
\]
The commutativity of the upper left triangle of the diagram \autoref{opendiagram} was part of the proof of \autoref{Gammacomm}. The upper square of this diagram commutes by naturality of $\ad_f$ and the commutativity of the square below follows from \autoref{flatcommute}. Finally, the bottom left square commutes by naturality of the transformation $Rf_* Rf'_* \to R\overline{f}_* Rj_*$.    
\end{proof} 
\begin{lemma} 
The map $\tr_f$ is well-defined, i.e.\ it is independent of the choice of the compactification $j\colon X \into \LX$. 
\end{lemma}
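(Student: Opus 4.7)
The plan is to use Nagata's compactification theorem to reduce the comparison of two traces to the case where one compactification dominates the other. Given two compactifications $X \xrightarrow{j_1} \LX_1 \xrightarrow{\overline{f}_1} Y$ and $X \xrightarrow{j_2} \LX_2 \xrightarrow{\overline{f}_2} Y$, I would consider the morphism $(j_1,j_2)\colon X \to \LX_1 \times_Y \LX_2$. Because $f$ is separated, $(j_1,j_2)$ is a closed immersion into the open subscheme $X \times_Y \LX_2 \subseteq \LX_1 \times_Y \LX_2$ and therefore a locally closed immersion. Its scheme-theoretic closure $\LX_3$ is a compactification of $X$ over $Y$ in which $X$ sits as an open subscheme, and the projections give proper $Y$-morphisms $p_i\colon \LX_3 \to \LX_i$ with $p_i \circ j_3 = j_i$ that are isomorphisms over $X$. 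By transitivity it therefore suffices to prove independence under the extra assumption that there is a proper $Y$-morphism $g\colon \LX_2 \to \LX_1$ with $g \circ j_2 = j_1$; in particular $\overline{f}_2 = \overline{f}_1 \circ g$ and $g$ is automatically an isomorphism over $X$.

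In this dominated situation TRA~1 (\autoref{proptrace}(a)) yields $\tr_{\overline{f}_2} = \tr_{\overline{f}_1}\circ R\overline{f}_{1*}\tr_g$. Substituting this, together with $R\overline{f}_{2*} = R\overline{f}_{1*}Rg_*$, $Rj_{1*} = Rg_*Rj_{2*}$, and the canonical isomorphisms $\overline{f}_2^! \cong g^!\overline{f}_1^!$ and $j_2^*g^! \cong j_1^*$ (the latter from $(g\circ j_2)^! = j_2^*g^!$ since $j_2$ is an open immersion), into the defining composition of $\tr_f$ via $\LX_2$ in \autoref{tracedef} and cancelling the common outermost factor $\tr_{\overline{f}_1}$, the asserted equality reduces to agreement of the two natural transformations
\begin{align*}
\alpha_1 &\colon Rj_{1*}R\Gamma_{Z'}j_1^* \xrightarrow{\epsilon_1^{-1}} R\Gamma_{Z'} \longrightarrow \id, \\
\alpha_2 &\colon Rj_{1*}R\Gamma_{Z'}j_1^* \cong Rg_*Rj_{2*}R\Gamma_{Z'}j_2^*g^! \xrightarrow{Rg_*\epsilon_2^{-1}g^!} Rg_*R\Gamma_{Z'}g^! \longrightarrow Rg_*g^! \xrightarrow{\tr_g} \id
\end{align*}
of functors on $D_{\qc}^+(\LX_1)$.

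The key observation is that $\alpha_1$ and $\alpha_2$ are precisely the two instances of the generalized trace $\tr_{j_1}$ of the open immersion $j_1\colon X \to \LX_1$ with support in $Z'$, computed via the trivial compactification $X \xrightarrow{j_1}\LX_1\xrightarrow{\id}\LX_1$ and via $X \xrightarrow{j_2}\LX_2\xrightarrow{g}\LX_1$, respectively. Both constructions are legitimate because, by \autoref{ExerciseHartshorne}, $j_1(Z')$ is closed in $\LX_1$ and $j_2(Z')$ is closed in $\LX_2$. Now \autoref{opentrace}, applied to the open immersion $j_1$ with support $Z'$, asserts exactly that the trace $\tr_{j_1}$ computed via \emph{any} compactification coincides with $\epsilon_1^{-1}$ followed by the natural morphism $R\Gamma_{Z'} \to \id$. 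Hence $\alpha_1 = \alpha_2$ and the two candidates for $\tr_f$ agree.

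The main obstacle will be the diagrammatic bookkeeping in passing from the definition of $\tr_f$ via $\LX_2$ to the expression displayed above for $\alpha_2$: one has to verify that the various natural isomorphisms --- $Rj_{1*} \cong Rg_*Rj_{2*}$, $\overline{f}_2^! \cong g^!\overline{f}_1^!$, $j_2^*g^!\cong j_1^*$, and the different instances of $\epsilon$ from \autoref{Gammajung} --- interact with the TRA~1 decomposition of $\tr_{\overline{f}_2}$ in a coherent way, so that the reduction to the open-immersion statement is genuinely formal. Once this is done the proof rests on a single application of \autoref{opentrace}, which supplies the conceptual content of the argument.
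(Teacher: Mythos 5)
Your proposal is correct and follows essentially the same route as the paper: reduce via the fiber product $\LX_1\times_Y\LX_2$ to the case where one compactification dominates the other, decompose the trace of the dominating proper map by TRA~1, cancel the common proper factor, and identify the remaining discrepancy as the independence statement for the open immersion, which is exactly \autoref{opentrace}. The paper packages this as one large commutative diagram of functors whose only non-obvious part is the rectangle you isolate as $\alpha_1=\alpha_2$, so the two arguments coincide in substance.
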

\begin{proof}
Let $j_1\colon X \to \LX_1$ and $j_2\colon X \to \LX_2$ be two open immersions with proper morphisms $f_1\colon \LX_1 \to Y$ and $f_2\colon \LX_2 \to Y$ such that $f = f_1 \circ j_1 = f_2 \circ j_2$. By considering $\LX_1 \times_Y \LX_2$ we can reduce to the case that there is a proper morphism $g\colon \LX_1 \to \LX_2$ such that $g \circ j_1 = j_2$, i.e.\ the diagram 
\[
	\xymatrix{
		& X \ar[dl]_{j_1} \ar[dr]^{j_2} \ar[dd]_/-15pt/f & \\
		\LX_1\ \ar[dr]_{f_1} \ar[rr]_/15pt/g & & \LX_2 \ar[dl]^{f_2} \\
		& Y & 
	}
\]
commutes. That $\tr_f$ is well-defined means exactly that the following diagram of functors is commutative:
\[
	\xymatrix{
		& Rf_*R\Gamma_Zf^! \ar[dl]_-{\sim} \ar[d]_-{\sim} \ar[dr]^-{\sim} & \\
		R{f_1}_*R{j_1}_*R\Gamma_Zj_1^!f_1^!  \ar[d] & R{f_2}_*Rg_*R{j_1}_*R\Gamma_Zj_1^!g^!f_2^! \ar[l]_-{\sim} \ar[r]^-{\sim} \ar[d] & R{f_2}_*{j_2}_* R\Gamma_Z j_2^!f_2^! \ar[d] \\
		R{f_1}_*R\Gamma_Zf_1^! \ar[d]	& R{f_2}_*Rg_*R\Gamma_Zg^!f_2^! \ar[l]_-{\sim} \ar[d] & R{f_2}_*R\Gamma_Zf_2^! \ar[d] \\
		R{f_1}_*f_1^! \ar[dr]_-{\tr_{f_1}} & R{f_2}_*Rg_*g^!f_2^! \ar[l]_-{\sim} \ar[r]^-{{Rf_2}_* \tr_g f_2^!} \ar[d]^{\tr_{f_2 \circ g}} & R{f_2}_*Rf_2^! \ar[dl]^-{\tr_{f_2}} \\
		& \id. & \\
	}
\]	
Here the six vertical arrows in the middle are the natural maps occurring in the definition of $\tr_f$. The only part of which the commutativity is not obvious is the bigger rectangle on the right hand side, which follows from \autoref{opentrace} after canceling $R{f_2}_*$ and $f_2^!$ from the edges of the terms.    
\end{proof}
Note that the independence of $tr_f$ of the chosen compactification implies that $\tr_f$ equals the classical trace map whenever $f$ is proper.
\begin{proposition} \label{traceres}
The map $\tr_f$ is compatible with residually stable base change: For a residually stable morphism $g\colon S \to Y$, let $f'$ and $g'$ be the projections of $S \times_Y X$. Furthermore, let $Z_S$ and $Z_S'$ be the preimages of $Z$ and $Z'$ in $S$ and $S \times_Y X$. Then the diagram
\[	
	\xymatrix@C55pt{
		g^*Rf_*R\Gamma_{Z'}f^! \ar[d]_-{\bc}^-{\sim} \ar[r]^-{g^* \tr_f} & g^* \\
		Rf'_*g'^*R\Gamma_{Z'}f^! \ar[d]^-{\sim} & \\
		Rf'_*R\Gamma_{Z_S'}g'^*f^! \ar[r]^-{Rf'_*R\Gamma_{Z_S'}\beta} & Rf'_*R\Gamma_{Z_S'}f'^!g^* \ar[uu]_-{\tr_{f'}g^*} 
	}
\]
commutes. Here $\beta$ is the isomorphism of \autoref{flatcommute}.  
\end{proposition}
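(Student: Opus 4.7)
The plan is to reduce the claim to TRA~4 (\autoref{proptrace}(b)) for the trace map of a proper compactification of $f$.

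Choose a compactification $X \xrightarrow{j} \LX \xrightarrow{\overline{f}} Y$ of $f$ used to compute $\tr_f$ via \autoref{tracedef}. Since both open immersions and proper morphisms are stable under base change, pulling back along $g$ yields a compactification $S \times_Y X \xrightarrow{\overline{j}} S \times_Y \LX \xrightarrow{\overline{f}_S} S$ of $f'$, which I would use to compute $\tr_{f'}$. Let $\overline{g}'\colon S \times_Y \LX \to \LX$ be the second projection; it is flat because residual stability implies flatness.

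Next I would decompose $\tr_f$ into its five constituent natural transformations from \autoref{tracedef} and likewise for $\tr_{f'}$. Applying $g^*$ on the left and pasting in the appropriate base change isomorphisms on the right, the diagram of the proposition splits into a vertical stack of squares, one for each stage. The decisive square is the one involving $\tr_{\overline{f}}$: this is precisely TRA~4 applied to the proper morphism $\overline{f}$ and the flat morphism $g$. The square for the canonical transformation $R\Gamma_{Z'} \to \id$ is immediate by naturality. The square for $R\overline{f}_* \epsilon^{-1} \overline{f}^!$ commutes because $\epsilon$ is built from the adjunction unit $\id \to R\overline{j}_* \overline{j}^*$ together with the canonical isomorphism $\overline{j}^* R\Gamma_{Z'} \cong R\Gamma_{Z'} \overline{j}^*$ from \autoref{Gammacomm}(b), both of which are stable under the flat pullback $\overline{g}'^*$. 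Finally, the square for the isomorphism $Rf_* R\Gamma_{Z'} f^! \cong R\overline{f}_* Rj_* R\Gamma_{Z'} j^* \overline{f}^!$ commutes factor by factor, using flat base change for $Rj_*$ and $R\overline{f}_*$ together with \autoref{Gammacomm}(b) for $R\Gamma_{Z'}$.

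The main technical obstacle will be to identify, inside this last square, the base change $\beta$ for the (non-proper) morphism $f$ appearing on the right-hand side of the target diagram with the composition of the base change $\beta_{\overline{f}}$ for the proper morphism $\overline{f}$ (which is what appears in TRA~4) and the canonical base change for the flat open immersion $j$. This compatibility of $\beta$ with the factorization $f = \overline{f} \circ j$ is standard in the residual complex formalism recalled at the start of this section; intuitively, it follows from the identification $f^! = j^* \overline{f}^!$ and the fact that base change along an open immersion reduces to a canonical isomorphism after identifying pullback squares. Once this compatibility is pinned down, all five squares commute and hence so does the outer diagram, proving the proposition.
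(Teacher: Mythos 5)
Your proposal is correct and follows essentially the same route as the paper: choose a compactification, pull it back along $g$ to get a compactification of $f'$, split the base-change diagram along the stages of \autoref{tracedef}, and conclude from TRA 4 for the proper map $\overline{f}$ together with the compatibility of the adjunction unit $\id \to Rj_*j^*$ (hence of $\epsilon$) with flat base change. The paper merely packages the open-immersion stages slightly differently, first proving the base-change statement for the trace of an open immersion separately and then invoking naturality of $\tr_j$ and $\tr_{j'}$ for the remaining squares.
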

\begin{proof}
First we treat the case of an open immersion $u\colon U \to Y$. Let $h\colon S \to Y$ be a residually stable morphism and let $u'$ and $h'$ denote the projections of $S \times_Y U$. Again, we let $\ad_u$, $\ad_{u'}$, $\ad_{h'}$ and $\ad_{h \circ u'}$ denote the units of adjunction. The natural isomorphisms $Ru_*Rh'_* \cong Rh_*Ru'_*$ and $h'^*u^* \cong u'^*h^*$ are compatible with the adjunction of $(u \circ h')^*$ and $R(u \circ h')_*$, i.e.\ the diagram
\[
	\xymatrix{
		\id \ar[r]^-{\ad_{h \circ u'}} \ar[d]_-{\ad_u} & Rh_*Ru'_*u'^*h^* \ar[r]^-{\sim} & Rh_*Ru'_*h'^*u^* \ar@{=}[d] \\
		Ru_*u^* \ar[r]^-{\ad_{h'}} & Ru_*Rh'_*h'^*u^* \ar[r]^-{\sim} & Rh_*Ru'_*h'^*u^* 
	}
\]
of natural maps commutes.  Passing to the adjoint maps we see that the square
\[
	\xymatrix{
	h^* \ar[r]^-{\ad_{u'}} \ar[d]_-{\ad_u} & Ru'_*u'^*h^* \\
	h^*Ru_*u^* \ar[r]^-{\bc} & Ru'_*h'^*u^* \ar[u]_-{\sim}
	}
\]	
is commutative. Applying the derived local cohomology functor and taking the inverse of the now invertible units of adjunction (\autoref{Gammajung}), we obtain the commutative diagram
\[
	\xymatrix{
		h^*Ru_*R\Gamma_{Z'}u^* \ar[r]^-{h^* \tr_u} \ar[d]_-{\bc} \ar@{.>}[dr]^-{d}& h^* \\
		Ru'_*h'^*R\Gamma_{Z'}u^* \ar[r]^-{\sim} & Ru'_*R\Gamma_{Z_U'}u'^*h^*, \ar[u]_-{\tr_{u'} h^*} 
	}
\]
where $d$ denote the composition $h^*Ru_*R\Gamma_{Z'}u^* \overset{\bc}{\longrightarrow} Ru'_*h'^*R\Gamma_{Z'}u^* \overset{\sim}{\longrightarrow} Ru'_*R\Gamma_{Z_U'}u'^*h^*$.

Now we choose a compactification $X \xrightarrow{j} \LX \xrightarrow{\overline{f}} Y$ of $f$. Then $S \times_Y X \xrightarrow{j'} S \times_Y \LX \xrightarrow{\overline{f'}} S$ is a compactification of $f'$ where $j' := \id \times j$ and $\overline{f'}$ is the projection. Let $\overline{g'}\colon S \times_Y \LX \to \LX$ denote the projection onto $\LX$. The three squares in the commutative diagram
\[
	\xymatrix{
		& S \times_Y \LX \ar[rr]^-{\overline{g'}} \ar@/^4mm/[dddl]^{\overline{f'}} & & \LX \ar@/^4mm/[dddl]^{\overline{f}} \\
		S \times_Y X \ar[ur]^{j'} \ar[rr]^/5mm/{g'} \ar[dd]_{f'} & & X \ar[ur]^j \ar[dd]_f & \\
		& & & \\
		S \ar[rr]^-g & & Y &
	}
\]	
are cartesian. It suffices to show the commutativity of 
\[
	\xymatrix{
		g^*\overline{f}_*j_*\Gamma_{Z'}j^*\overline{f}^! \ar[r]^-{\bc} \ar[d]_{\sim}^{\tr_j} & \overline{f'}_*\overline{g'}^*j_*\Gamma_{Z'}j^*\overline{f}^! \ar[r]^-{d} \ar[d]_{\sim}^{\tr_j} & \overline{f'}_*j'_*\Gamma_{Z_S'}j'^*\overline{g'}^*\overline{f}^! \ar[r]^-{\beta} \ar[d]_{\sim}^{\tr_{j'}} & \overline{f'}_*j'_*\Gamma_{Z_S'}j'^*\overline{f'}^!g^* \ar[d]_{\sim}^{\tr_{j'}} \\
		g^*\overline{f}_*\Gamma_{Z'}\overline{f}^! \ar[r]^-{\bc} \ar[d]^{\tr_{\overline{f}}} & \overline{f'}_*\overline{g'}^*\Gamma_{Z'} \overline{f}^! \ar[r]^-{\sim} & \overline{f'}_*\Gamma_{Z_S'}\overline{g'}^*\overline{f}^! \ar[r]^-{\beta} & \overline{f'}_* \Gamma_{Z_S'} \overline{f'}^! g^* \ar[d]^{\tr_{\overline{f'}}} \\
		g^* \ar[rrr]^-{\id} & & & g^*,
	}
\]
where we left out the $R$ indicating derived functors to streamline the notation. The first and the third upper square commute because of the naturality of $\tr_j$ and $\tr_{j'}$. The commutativity of the upper square in the middle is the case of an open immersion, which we have already seen. Finally, the commutativity of the bottom rectangle is the case of a proper morphism (\autoref{proptrace}). 
\end{proof}
\begin{proposition} \label{tracecomp}
Let $f\colon X \to Y$ and $g\colon Y \to S$ be separated and finite type morphisms of schemes. Assume that $i\colon Z \to S$, $i'\colon Z' \to Y$ and $i''\colon Z'' \to X$ are closed immersions with proper morphisms $f'\colon Z'' \to Z'$ and $g'\colon Z' \to Z$ making the diagram
\[
	\xymatrix{
		Z'' \ar[r]^-{i''} \ar[d]^-{f'} & X \ar[d]^-f \\
		Z' \ar[r]^-{i'} \ar[d]^-{g'} & Y \ar[d]^-g \\
		Z \ar[r]^-i & S
	}
\]
commutative. Then there is a commutative diagram:  
\[
	\xymatrix@C40pt@R30pt{
		R(g \circ f)_*R\Gamma_{Z''}(g \circ f)^! \ar[d]^{\sim} \ar[drr]^-{\tr_{g \circ f}} & & \\ 
		Rg_*Rf_*R\Gamma_{Z''}f^!g^! \ar[r]_-{Rg_* \tr_f g^!} & Rg_*R\Gamma_{Z'}g^! \ar[r]_-{\tr_g} & \id.
	}
\]	
\end{proposition}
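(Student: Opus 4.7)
The plan is to pick Nagata compactifications of $f$, $g$ and $g \circ f$ that are compatible with each other, and then to reduce the statement to the classical TRA 1 (\autoref{proptrace}(a)) for proper morphisms combined with the open immersion computation of \autoref{opentrace}. Concretely, I would fix a compactification $X \xrightarrow{j_X} \LX \xrightarrow{\overline{f}} Y$ of $f$ and a compactification $Y \xrightarrow{j_Y} \overline{Y} \xrightarrow{\overline{g}} S$ of $g$. The composite $j_Y \circ \overline{f} \colon \LX \to \overline{Y}$ is of finite type, so by Nagata it factors as $\LX \xrightarrow{k} \widetilde{X} \xrightarrow{\widetilde{f}} \overline{Y}$ with $k$ an open immersion and $\widetilde{f}$ proper. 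Then $k \circ j_X \colon X \to \widetilde{X}$ is an open immersion and $\overline{g} \circ \widetilde{f} \colon \widetilde{X} \to S$ is proper, yielding a compactification of $g \circ f$ that, by the established independence of the trace from the chosen compactification, we may use to compute $\tr_{g \circ f}$.

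With these choices, both composites in the target diagram expand into concatenations of proper traces $\tr_{\overline{f}}, \tr_{\widetilde{f}}, \tr_{\overline{g}}, \tr_{\overline{g} \circ \widetilde{f}}$, adjunction units and counits for the open immersions $j_X, j_Y, k$ and $k \circ j_X$, the comparison isomorphisms $\epsilon$ of \autoref{Gammajung}, the pushforward/local-cohomology swap of \autoref{Gammacomm}, and the natural maps $R\Gamma_{(-)} \to \id$. I would draw this large diagram and subdivide it into small squares whose commutativity is individually transparent: the inner block formed by the proper traces $\tr_{\overline{g}}$, $\tr_{\widetilde{f}}$ and $\tr_{\overline{g} \circ \widetilde{f}}$ commutes by TRA 1 applied to $\overline{g} \circ \widetilde{f}$; the two blocks in which one of the open immersions $j_X$ or $j_Y$ appears against a proper trace commute by \autoref{opentrace}, whose applicability is guaranteed by \autoref{ExerciseHartshorne}, which ensures that $Z''$ remains closed in $\LX$ and in $\widetilde{X}$ and that $Z'$ remains closed in $\overline{Y}$; all remaining squares commute by naturality of the various transformations and of the base change morphism $\bc$ (flat base change along $j_Y$ appears when $Rf_*$ is post-composed with $j_Y^*$).

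The crucial compatibility that ties everything together, and which I expect to be the main bookkeeping obstacle, is that the $\epsilon$-isomorphism for the composite open immersion $k \circ j_X$ factors through those for $k$ and $j_X$: writing $\epsilon_{j_X}$, $\epsilon_k$ and $\epsilon_{k \circ j_X}$ for the isomorphisms of \autoref{Gammajung}, one must check that
\[
	\epsilon_{k \circ j_X} = Rk_* \epsilon_{j_X} k^* \circ \epsilon_k
\]
as transformations $R\Gamma_{Z''} \to R(k \circ j_X)_* R\Gamma_{Z''} (k \circ j_X)^*$. This follows directly from the construction of $\epsilon$ as the composition of the adjunction unit with the natural isomorphism moving $R\Gamma_{Z''}$ across the pullback, together with the compatibility of units of adjunction under composition. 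Once this observation is recorded, the open-immersion part of the diagram collapses correctly, and combining it with TRA 1 for $\overline{g} \circ \widetilde{f}$ yields the desired commutativity.
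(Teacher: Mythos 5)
Your overall strategy (compatible compactifications, reduce to TRA~1 plus control of the open immersions) is the same as the paper's, but the way you choose the compactifications creates a gap that your cited lemmas do not close. You first fix a compactification $\overline{f}\colon \LX \to Y$ of $f$ and then compactify $j_Y \circ \overline{f}$ as $\widetilde{f}\circ k$. With that choice, the decisive comparison in the big diagram is between the proper trace $\tr_{\overline{f}}$ (living over $Y$) and the proper trace $\tr_{\widetilde{f}}$ (living over $\overline{Y}$) across the open immersion $j_Y$, i.e.\ the commutativity of the square built on $\widetilde{f}\circ k = j_Y\circ\overline{f}$. This square of schemes is \emph{not} cartesian in general (the canonical map $\LX \to \widetilde{X}\times_{\overline{Y}}Y$ is only a clopen immersion), so neither ``naturality of $\bc$'' nor \autoref{opentrace} applies to it: \autoref{opentrace} computes the trace of an open immersion via a compactification of that open immersion, which is not the configuration at hand, and the tool actually needed is the flat base change compatibility of the proper trace, TRA~4 of \autoref{proptrace} resp.\ \autoref{traceres}, which requires a cartesian square. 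Relatedly, the compatibility you single out as crucial, $\epsilon_{k\circ j_X} = Rk_*\epsilon_{j_X}k^*\circ\epsilon_k$, is correct but routine; it is not where the difficulty sits.

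The fix is exactly the paper's device: compactify $j_Y\circ f$ (not $f$) as $X\xrightarrow{j_X}\LX\xrightarrow{f'}\overline{Y}$ and then take the compactification of $f$ to be $X \to Y\times_{\overline{Y}}\LX \xrightarrow{\pr_Y} Y$, where $\pr_Y$ is by construction the base change of the proper map $f'$ along the flat map $j_Y$; the relevant square is then cartesian on the nose, \autoref{traceres} applies to $f'$ and $j_Y$, and TRA~1 applied to $\overline{g}\circ f'$ finishes the argument. (Equivalently, in your setup you must first invoke the independence of $\tr_f$ from the compactification to replace $\LX$ by $\widetilde{X}\times_{\overline{Y}}Y$, and separately handle the clopen immersion $\LX\to\widetilde{X}\times_{\overline{Y}}Y$; without this extra reduction the step relating $\tr_{\overline{f}}$ to $\tr_{\widetilde{f}}$ is unproved, and proving it directly amounts to the special case of the proposition for a proper morphism followed by an open immersion, which would be circular.)
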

\begin{proof}
Choose a compactification $Y \xrightarrow{j_Y} \overline{Y} \xrightarrow{\overline{g}} S$ of $g$, then choose a compactification $X \xrightarrow{j_X} \LX \xrightarrow{f'} \overline{Y}$ of the composition $j_Y \circ f$. The morphisms $f$ and $j_X$ induce a morphism $h\colon X \to Y \times_{\overline{Y}}\LX$. The projection $\pr_Y\colon Y \times_{\overline{Y}} \LX \to Y$ is proper because it is the base change of the proper morphism $f'$. The projection $\pr_{\LX}\colon X \times_{\overline{Y}}\LX \to \LX$ is a base change of $j_Y$ and hence an open immersion. We obtain the following commutative diagram:
\[
	\xymatrix{
		X \ar[d]_-f \ar[r]^-{h} & Y \times_{\overline Y} \LX \ar[dl]^-{\pr_{Y}} \ar[r]^-{\pr_{\LX}} & \LX \ar[dl]^-{f'} \\
		Y \ar[d]_-g \ar[r]_-{j_Y} & \overline{Y} \ar[dl]^-{\overline{g}} & \\
		S.
	}
\]
Because $\pr_{\LX} \circ h$ equals the open immersion $j_X$, it follows that $h$ is an open immersion too. The asserted compatibility of the trace map now follows from the compatibility of the classical trace with compositions applied to the proper morphisms $f'$ and $\overline{g}$ and using \autoref{traceres} for $f'$ and the open immersion $j_Y$. The details of the calculation are left to the reader. 
\end{proof}

\section{Adjunction for morphisms with proper support}

With his approach to the functor $f^!$ mentioned in the preceding section, Lipman proved the following version of Grothendieck duality (\cite[Corollary 4.4.2]{LipmanGrothDual}).
\begin{theorem} \label{adjunctionclassic}
	Let $f\colon X \to Y$ be a proper morphism between Noetherian schemes. For $\CF^{\bullet} \in D_{\qc}(X)$ and $\CG^{\bullet} \in D_{\qc}^+(Y)$, the composition
	\begin{align*} 
	\xymatrix{
		Rf_* \RSHom_{\CO_X}^{\bullet}(\CF^{\bullet},f^!\CG^{\bullet}) \ar[r] &  \RSHom_{\CO_Y}^{\bullet}(Rf_* \CF^{\bullet},Rf_*f^! \CG^{\bullet}) \ar[d]^{\tr_f} \\
		& \RSHom_{\CO_Y}^{\bullet}(Rf_* \CF^{\bullet}, \CG^{\bullet})}
	\end{align*}
	is an isomorphism. Here the first morphism is the canonical one and the second is the trace map. 
\end{theorem}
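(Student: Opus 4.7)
The plan is to deduce the sheafified duality from a global adjunction via Yoneda's lemma. Following Lipman's approach, one first uses the Special Adjoint Functor Theorem (applied in a well-generated triangulated setting) to construct a right adjoint $f^!$ to $Rf_* \colon D_{\qc}(X) \to D_{\qc}(Y)$: the key inputs are that $Rf_*$ preserves small direct sums (which holds when $f$ is proper between Noetherian schemes, since $Rf_*$ then has bounded cohomological amplitude on $D_{\qc}$) and that $D_{\qc}(Y)$ is well-generated. The trace $\tr_f\colon Rf_*f^! \to \id$ is then the counit of this global adjunction.

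To prove that the morphism in the statement is an isomorphism in $D_{\qc}(Y)$, I would apply $\Hom_{D(Y)}(\mathcal{H}^\bullet,\usc)$ for arbitrary $\mathcal{H}^\bullet \in D_{\qc}(Y)$ and invoke Yoneda. Combining tensor-Hom adjunction, the adjunction $Lf^* \dashv Rf_*$, the projection formula $Rf_*(Lf^*\mathcal{H}^\bullet \derotimes \CF^\bullet) \cong \mathcal{H}^\bullet \derotimes Rf_*\CF^\bullet$ (valid for proper $f$, see \cite[Proposition 3.9.4]{LipmanGrothDual}), and the global adjunction $Rf_* \dashv f^!$, one obtains a chain of canonical isomorphisms
\begin{align*}
\Hom(\mathcal{H}^\bullet, Rf_*\RSHom^\bullet_{\CO_X}(\CF^\bullet, f^!\CG^\bullet))
&\cong \Hom(Lf^*\mathcal{H}^\bullet \derotimes \CF^\bullet, f^!\CG^\bullet) \\
&\cong \Hom(Rf_*(Lf^*\mathcal{H}^\bullet \derotimes \CF^\bullet), \CG^\bullet) \\
&\cong \Hom(\mathcal{H}^\bullet \derotimes Rf_*\CF^\bullet, \CG^\bullet) \\
&\cong \Hom(\mathcal{H}^\bullet, \RSHom^\bullet_{\CO_Y}(Rf_*\CF^\bullet, \CG^\bullet)).
\end{align*}
By Yoneda this produces a natural isomorphism in $D_{\qc}(Y)$ between the source and the target in the statement.

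The principal obstacle will be to verify that the Yoneda-induced isomorphism actually agrees with the specific morphism defined in the theorem (the natural map $Rf_* \RSHom^\bullet \to \RSHom^\bullet(Rf_*\usc, Rf_*\usc)$ followed by $\tr_f$), and not just some other isomorphism between the same objects. This is a careful but routine diagram chase: one tracks units and counits through the tensor-Hom and projection formula isomorphisms and checks that, under the adjunction $Rf_* \dashv f^!$, the final identification corresponds to post-composing with the counit $\tr_f$. A secondary subtlety is that $\CF^\bullet$ is allowed to be unbounded in $D_{\qc}(X)$: one must ensure that both the projection formula and the adjunction $Rf_* \dashv f^!$ extend to the unbounded quasi-coherent setting, which is precisely the point of working within Lipman's abstract framework rather than the Cousin-complex construction of \cite{HartshorneRD}.
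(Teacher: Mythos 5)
The paper does not prove this theorem itself but cites it as \cite[Corollary 4.4.2]{LipmanGrothDual}, and your sketch is a faithful reconstruction of that cited argument: construct the right adjoint of $Rf_*$ abstractly (Brown representability / the Special Adjoint Functor Theorem, using that $Rf_*$ commutes with coproducts), take $\tr_f$ to be the counit, and then deduce the sheafified statement by Yoneda from tensor-Hom adjunction, $Lf^*\dashv Rf_*$, the unbounded projection formula, and $Rf_*\dashv f^!$. The only quibble is that the compact/well-generation hypothesis needed for the adjoint functor theorem is on the source category $D_{\qc}(X)$ rather than on $D_{\qc}(Y)$; since both hold for Noetherian schemes, nothing breaks.
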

This generalizes the classical coherent duality (\cite[VII, 3.4(c)]{HartshorneRD}), where $\CF^{\bullet} \in D_{\coh}^-(X)$ and $\CG^{\bullet} \in D_{\coh}^+(Y)$ and $Y$ is assumed to  have a dualizing complex. In this paper we relax the properness assumption and show the following:
\begin{theorem} \label{qcohadjunction} 
	Let $f\colon X \to Y$ be a separated and finite type morphism of Noetherian schemes and let $i\colon Z \to Y$ and $i'\colon Z' \to X$ be closed immersions with a proper morphism $f'\colon Z' \to Z$ such that the diagram 
	\[
	\xymatrix{
		Z' \ar[r]^-{i'} \ar[d]^-{f'} & X \ar[d]^-f \\
		Z \ar[r]^-i & Y
	}
	\]
	commutes. Then for all $\CF^{\bullet} \in D_{\qc}^-(\CO_X)_Z$ and $\CG^{\bullet} \in D_{\qc}^+(\CO_Y)_Z$ (see \autoref{supportdefine}), the composition 
	\begin{align*}
	\xymatrix{
		Rf_* \RSHom_{\CO_X}^{\bullet}(\CF^{\bullet},R\Gamma_{Z'}f^!\CG^{\bullet}) \ar[r] &  \RSHom_{\CO_Y}^{\bullet}(Rf_* \CF^{\bullet},Rf_*R\Gamma_{Z'}f^! \CG^{\bullet}) \ar[d]^{\tr_f} \\
		& \RSHom_{\CO_Y}^{\bullet}(Rf_* \CF^{\bullet}, \CG^{\bullet})}
	\end{align*}
	is an isomorphism. Here $\tr_f\colon Rf_*R\Gamma_{Z'}f^! \to \id$ is the natural transformation of \autoref{tracedef}. In particular, taking global sections, the functor $Rf_*$ is left adjoint to the functor $R\Gamma_{Z'}f^!$. 
\end{theorem}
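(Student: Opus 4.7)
The plan is to reduce the statement to the classical duality \autoref{adjunctionclassic} via a Nagata compactification $f = \overline{f} \circ j$, where $j\colon X \to \LX$ is an open immersion and $\overline{f}\colon \LX \to Y$ is proper. By \autoref{ExerciseHartshorne} the image $j(Z')$ is closed in $\LX$, so the isomorphism $\epsilon$ of \autoref{Gammajung} and the equivalence of categories in \autoref{opensupport} are both available with the same subset $Z'$.

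The first step is to construct the asserted isomorphism as a chain of natural isomorphisms. Since $j$ is an open immersion one has $f^! = j^*\overline{f}^!$, hence
\[
Rf_*\RSHom_{\CO_X}^{\bullet}(\CF^{\bullet}, R\Gamma_{Z'}f^!\CG^{\bullet}) = R\overline{f}_*Rj_*\RSHom_{\CO_X}^{\bullet}(\CF^{\bullet},R\Gamma_{Z'}j^*\overline{f}^!\CG^{\bullet}).
\]
As $\CF^{\bullet} \in D_{\qc}^-(\CO_X)_{Z'}$, \autoref{opennattrans} applies and identifies the inner $Rj_*\RSHom^{\bullet}$ with $\RSHom_{\CO_{\LX}}^{\bullet}(Rj_*\CF^{\bullet}, Rj_*R\Gamma_{Z'}j^*\overline{f}^!\CG^{\bullet})$. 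Next, applying $\epsilon^{-1}$ from \autoref{Gammajung} to the second argument turns $Rj_*R\Gamma_{Z'}j^*\overline{f}^!$ into $R\Gamma_{Z'}\overline{f}^!$. Since $Rj_*\CF^{\bullet}$ lies in $D_{\qc}(\LX)_{Z'}$ by \autoref{opensupport}, \autoref{Gammaadj} lets us drop the $R\Gamma_{Z'}$ from the second argument. Altogether we arrive at
\[
R\overline{f}_*\RSHom_{\CO_{\LX}}^{\bullet}(Rj_*\CF^{\bullet}, \overline{f}^!\CG^{\bullet}),
\]
and \autoref{adjunctionclassic} applied to the proper morphism $\overline{f}$ identifies this with $\RSHom_{\CO_Y}^{\bullet}(R\overline{f}_*Rj_*\CF^{\bullet},\CG^{\bullet}) = \RSHom_{\CO_Y}^{\bullet}(Rf_*\CF^{\bullet},\CG^{\bullet})$.

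The second and main step is to verify that this composite isomorphism coincides with the map in the statement, namely the natural morphism $Rf_*\RSHom^{\bullet} \to \RSHom^{\bullet}(Rf_*,Rf_*(-))$ followed by postcomposition with $\tr_f$. Since $\tr_f$ is defined in \autoref{tracedef} as $\tr_{\overline{f}} \circ (R\Gamma_{Z'}\to\id) \circ R\overline{f}_*\epsilon^{-1}\overline{f}^!$ composed with the canonical isomorphism $Rf_*R\Gamma_{Z'}f^! \cong R\overline{f}_*Rj_*R\Gamma_{Z'}j^*\overline{f}^!$, one has to chase the compatibility of (i) the natural map $Rj_*\RSHom^{\bullet}\to \RSHom^{\bullet}(Rj_*,Rj_*)$ from \autoref{opennattrans}, (ii) the functoriality of $\RSHom^{\bullet}(Rj_*\CF^{\bullet},-)$ under $\epsilon^{-1}$ and $R\Gamma_{Z'}\to\id$, and (iii) the classical adjunction for $\overline{f}$ with its trace $\tr_{\overline{f}}$. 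The compatibility of (i) with $R\overline{f}_*$ and of (iii) with the trace is built into their constructions, and step (ii) is a standard naturality statement for $\RSHom$; assembling these into a single commutative diagram is the only piece of real bookkeeping and is the main obstacle.

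Finally, the adjointness statement follows formally: applying $H^0 R\Gamma(Y,-)$ to the sheafified isomorphism yields
\[
\Hom_{D(\CO_Y)}(Rf_*\CF^{\bullet},\CG^{\bullet}) \cong \Hom_{D(\CO_X)}(\CF^{\bullet}, R\Gamma_{Z'}f^!\CG^{\bullet}),
\]
where the identification of the left-hand side uses that $Rf_*\CF^{\bullet}$ is supported on $Z$ (by \autoref{Gammacomm}) together with \autoref{Gammaadj}, so that the inclusion $D_{\qc}^+(\CO_Y)_Z \hookrightarrow D_{\qc}^+(\CO_Y)$ does not lose Hom-information and the right adjoint to the inclusion is indeed $R\Gamma_Z$. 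This gives the desired adjunction between $Rf_*$ and $R\Gamma_{Z'}f^!$ as functors between the subcategories supported on $Z'$ and $Z$.
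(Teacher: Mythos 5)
Your proposal is correct and follows essentially the same route as the paper: compactify via Nagata, convert the left-hand side into $R\overline{f}_*\RSHom^{\bullet}_{\CO_{\LX}}(Rj_*\CF^{\bullet},\overline{f}^!\CG^{\bullet})$ using \autoref{opennattrans}, \autoref{Gammajung} and \autoref{Gammaadj}, invoke \autoref{adjunctionclassic} for $\overline{f}$, and then take $H^0R\Gamma$ for the adjunction. The compatibility check you flag as the ``main obstacle'' is handled in the paper by a single commutative diagram of natural morphisms whose squares commute by naturality, so nothing beyond the bookkeeping you describe is needed.
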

Note that the properness of $f'$ is equivalent to the properness of $i \circ f'$.
\begin{proof}
Consider the commutative diagram
\[
	\xymatrix{
		Rf_*\RSHom_{\CO_X}^{\bullet}(\CF^{\bullet},R\Gamma_{Z'}f^!\CG^{\bullet}) \ar[d] \ar[dr] & \\
		R\overline{f}_*\RSHom_{\CO_{\LX}}^{\bullet}(Rj_*\CF^{\bullet},Rj_*R\Gamma_{Z'}j^*\overline{f}^!\CG^{\bullet}) \ar[r] \ar[d]^{\epsilon^{-1}} & \RSHom_{\CO_Y}^{\bullet}(Rf_*\CF^{\bullet},R\overline{f}_*Rj_*R\Gamma_{Z'}j^*\overline{f}^!\CG^{\bullet}) \ar[d] \\
		R\overline{f}_*\RSHom_{\CO_{\LX}}^{\bullet}(Rj_*\CF^{\bullet},R\Gamma_{Z'}\overline{f}^!\CG^{\bullet}) \ar[r] \ar[d] & \RSHom_{\CO_Y}^{\bullet}(Rf_*\CF^{\bullet},R\overline{f}_*R\Gamma_{Z'}\overline{f}^!\CG^{\bullet}) \ar[d] \\
		R\overline{f}_*\RSHom_{\CO_{\LX}}^{\bullet}(Rj_*\CF^{\bullet},\overline{f}^!\CG^{\bullet}) \ar[dr] \ar[r] & \RSHom_{\CO_Y}^{\bullet}(Rf_*\CF^{\bullet},R\overline{f}_*\overline{f}^!\CG^{\bullet}) \ar[d] \\
		& \RSHom_{\CO_Y}^{\bullet}(Rf_*\CF^{\bullet},\CG^{\bullet})
	}
\]
of natural morphisms. The vertical arrows on the left are isomorphisms by \autoref{opennattrans}, \autoref{Gammajung} and \autoref{Gammaadj}. The diagonal map to the lower right corner is the isomorphism from the duality of \autoref{adjunctionclassic} for the proper morphism $\overline{f}$. Hence the composition of the first diagonal morphism and the vertical morphisms on the right is an isomorphism.  

Finally, for the adjunction of $Rf_*$ and $R\Gamma_{Z'}f^!$, we apply the degree zero cohomology of the right derived functor of global sections $H^0R\Gamma$ to both sides of the just proven isomorphism 
\[
	\RSHom_{\CO_Y}^{\bullet}(Rf_*\CF^{\bullet},\CG^{\bullet}) \overset{\sim}{\longrightarrow} Rf_*\RSHom_{\CO_X}^{\bullet}(\CF^{\bullet},R\Gamma_{Z'}f^!\CG^{\bullet}).
\]
Then we use the natural isomorphisms 
\begin{align*}
	H^0R\Gamma\RSHom_{\CO_Y}^{\bullet}(Rf_*\CF^{\bullet},\CG^{\bullet}) &\overset{\sim}{\longrightarrow} H^0\RHom_{\CO_Y}^{\bullet}(Rf_*\CF^{\bullet},\CG^{\bullet}) \\
	&\overset{\sim}{\longrightarrow} \Hom_{D(\CO_Y)}(Rf_*\CF^{\bullet},\CG^{\bullet})
\end{align*}
of Proposition II.5.3 and Theorem I.6.4 of \cite{HartshorneRD} and similarly for 
\[
	H^0R\Gamma Rf_*\RSHom_{\CO_X}^{\bullet}(\CF^{\bullet},R\Gamma_{Z'}f^!\CG^{\bullet}), 
\]
where we additionally use the isomorphism $R\Gamma(X,\usc) \overset{\sim}{\longrightarrow} R\Gamma(Y,Rf_*(\usc))$ of Proposition II.5.2 of ibid.
\end{proof}
We conclude with a statement which, under certain hypothesis, allows us to recover the trace $\tr_f$ by its application to the structure sheaf $\CO_Y$. 
\begin{definition} \label{EssentiallyPerfect}
For a separated morphism $f\colon X \to Y$ of finite type, a compactification $X \xrightarrow{j} \LX \xrightarrow{\overline{f}} Y$ and $\CF^{\bullet} \in D_{\qc}^+(Y)$, let 
\[
	\chi_{\CF^{\bullet}}^f\colon f^!\CO_Y \derotimes_{\CO_X} Lf^*\CF^{\bullet} \to f^!\CF^{\bullet}
\]
be the morphism $j^*\phi$, where $\phi\colon \overline{f}^!\CO_Y \derotimes_{\CO_X} L\overline{f}^*\CF^{\bullet} \to \overline{f}^!\CF^{\bullet}$ is the adjoint of the composition
\[
	R\overline{f}_*(\overline{f}^!\CO_Y \derotimes_{\CO_X} L\overline{f}^*\CF^{\bullet}) \xrightarrow{\rho} R\overline{f}_*(\overline{f}^!\CO_Y) \derotimes_{\CO_Y} \CF^{\bullet} \xrightarrow{\tr_{\overline{f}}} \CO_Y \otimes_{\CO_Y} \CF^{\bullet}.
\]
Here $\rho$ denotes the isomorphism of the projection formula. The morphism $\chi_{\CF^{\bullet}}^f$ is independent of the choice of the compactification \cite[Proposition 5.8]{Na.CompEFT}. If $\chi^f$ is an isomorphism of functors, then the morphism $f$ is called \emph{essentially perfect}. 
\end{definition}
Theorem 5.9 of \cite{Na.CompEFT} Nayak gives various characterizations of essentially perfect morphisms. For example, smooth morphisms are essentially perfect.
\begin{proposition} \label{Nayak}
Let $f$ be an essentially perfect map fulfilling the assumptions and with the notation of \autoref{qcohadjunction}. For every $\CF^{\bullet} \in D_{\qc}^+(Y)$, there is a commutative diagram
\[
	\xymatrix{
		Rf_*R\Gamma_{Z'}(f^!\CO_Y \derotimes_{\CO_X} Lf^*\CF^{\bullet}) \ar[r]^-{\chi_{\CF^{\bullet}}^f} \ar[d]^-{\rho} & Rf_*R\Gamma_{Z'}f^!\CF^{\bullet} \ar[d]^-{\tr_f} \\
		Rf_*R\Gamma_{Z'}f^!\CO_Y \derotimes_{\CO_Y} \CF^{\bullet} \ar[r]^-{\tr_f \otimes \id} & \CF^{\bullet}.
	}
\]
Here $\rho$ denotes the isomorphism of the projection formula and \autoref{RGammaTensor}.
\end{proposition}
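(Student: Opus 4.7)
The plan is to reduce the statement to the proper case $\overline{f}$ and then transport everything back along the compactification $X \xrightarrow{j} \LX \xrightarrow{\overline{f}} Y$ that was used to \emph{define} both $\tr_f$ and $\chi^f_{\CF^\bullet}$. Indeed, by \autoref{EssentiallyPerfect} we have $\chi^f_{\CF^\bullet} = j^* \chi^{\overline{f}}_{\CF^\bullet}$, and by \autoref{tracedef} the trace $\tr_f$ factors through $\tr_{\overline{f}}$ after applying $R\overline{f}_* Rj_*$ and the isomorphism $\epsilon^{-1}$ of \autoref{Gammajung}. So the first move is to rewrite every corner of the square in terms of data living on $\LX$.

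First I would handle the analogous square for the proper morphism $\overline{f}$: the composition
\[
R\overline{f}_*\bigl(\overline{f}^!\CO_Y \derotimes L\overline{f}^*\CF^{\bullet}\bigr) \xrightarrow{R\overline{f}_*\chi^{\overline{f}}_{\CF^\bullet}} R\overline{f}_*\overline{f}^!\CF^{\bullet} \xrightarrow{\tr_{\overline{f}}} \CF^\bullet
\]
agrees with $(\tr_{\overline{f}} \otimes \id) \circ \rho$ essentially \emph{by construction}, since $\chi^{\overline{f}}_{\CF^\bullet}$ was defined to be the adjoint of exactly that composition; this is the content of the proper case of the proposition. I would then insert $R\Gamma_{Z'}$ everywhere and use \autoref{RGammaTensor} to move it freely across the tensor factors, yielding a commutative square
\[
\xymatrix@C20pt{
R\overline{f}_* R\Gamma_{Z'}\bigl(\overline{f}^!\CO_Y \derotimes L\overline{f}^*\CF^{\bullet}\bigr) \ar[r]^-{\chi^{\overline{f}}} \ar[d]_-{\rho} & R\overline{f}_* R\Gamma_{Z'}\overline{f}^!\CF^{\bullet} \ar[r] & R\overline{f}_* \overline{f}^!\CF^{\bullet} \ar[d]^-{\tr_{\overline{f}}} \\
R\overline{f}_* R\Gamma_{Z'}\overline{f}^!\CO_Y \derotimes \CF^{\bullet} \ar[rr]^-{(\tr_{\overline{f}}\circ\text{can})\otimes \id} & & \CF^\bullet,
}
\]
where the right-hand composition in the top row is $\tr_f$ after identifying $R\overline{f}_* R\Gamma_{Z'} \overline{f}^! \cong Rf_* R\Gamma_{Z'} f^!$.

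Next, I would transport this square from $\LX$ to $X$. The identification $R\Gamma_{Z'}(-) \cong Rj_* R\Gamma_{Z'} j^*(-)$ of \autoref{Gammajung} combined with the compatibility of $j^*$ with $\derotimes$ and $Lf^*$ gives a canonical isomorphism
\[
R\Gamma_{Z'}\bigl(\overline{f}^!\CO_Y \derotimes L\overline{f}^*\CF^{\bullet}\bigr) \cong Rj_*R\Gamma_{Z'}\bigl(f^!\CO_Y \derotimes Lf^*\CF^{\bullet}\bigr),
\]
and similarly for the target $R\Gamma_{Z'}\overline{f}^!\CF^\bullet$. Applying $R\overline{f}_*$, both columns of the square above become the columns of the proposition's diagram, with the horizontal arrows $\chi^f_{\CF^\bullet}$ and $\tr_f \otimes \id$ obtained by unwinding the definitions.

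The bookkeeping is the main obstacle: one must check that the projection-formula isomorphism $\rho$ defined on $X$ (via \autoref{RGammaTensor} and the projection formula for $Rf_*$) really agrees with the one pulled back from $\LX$, and that the passage $\epsilon^{-1}$ from \autoref{Gammajung} is compatible with the tensor structure used in the definition of $\chi^{\overline{f}}$. Both are naturality statements for projection-formula and local-cohomology isomorphisms; neither is conceptually deep, but diagrams must be chased carefully to see that the canonical maps in play on both sides of $j$ glue into the asserted square.
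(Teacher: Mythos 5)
Your proposal is correct and follows essentially the same route as the paper: reduce to the proper morphism $\overline{f}$, where the square commutes by the very construction of $\chi^{\overline{f}}_{\CF^{\bullet}}$ as the adjoint of $\tr_{\overline{f}}\circ\rho$, and then transport back along $j$ using \autoref{Gammajung} and \autoref{RGammaTensor}, with the remaining work being the compatibility of the projection formula with the unit and counit of the $(j^*, Rj_*)$ adjunction — exactly the diagram chase the paper carries out.
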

\begin{proof}
We have to verify that the diagram
\begin{align} \label{NayakDiagram}
	\xymatrix{
		Rf_*R\Gamma_{Z'}(f^!\CO_Y \derotimes_{\CO_X} Lf^*\CF^{\bullet}) \ar[r]^-{\chi_{\CF^{\bullet}}^f} \ar[d]^-{\sim} & Rf_*R\Gamma_{Z'}f^!\CF^{\bullet} \ar[d]_-{\sim} \\
		R\overline{f}_*Rj_*R\Gamma_{Z'}(j^*\overline{f}^!\CO_Y \derotimes j^*L\overline{f}^*\CF^{\bullet}) \ar[d]^-{\rho} & R\overline{f}_*Rj_*R\Gamma_{Z'}j^*\overline{f}^!\CF^{\bullet} \ar[dd]_-{\sim}^{\epsilon^{-1}} \\
		R\overline{f}_*(Rj_*R\Gamma_{Z'}j^*\overline{f}^!\CO_Y \derotimes L\overline{f}^*\CF^{\bullet}) \ar[d]_-{\epsilon^{-1}}^-{\sim} & \\
		R\overline{f}_*(R\Gamma_{Z'}\overline{f}^!\CO_Y \derotimes L\overline{f}^*\CF^{\bullet}) \ar[d]^-{\rho} \ar[r]^-{\phi} & R\overline{f}_*R\Gamma_{Z'}\overline{f}^!\CF^{\bullet} \ar[dd]^-{\tr_{\overline{f}}} \\
		Rf_*R\Gamma_{Z'}f^!\CO_Y \derotimes_{\CO_Y} \CF^{\bullet} \ar[d]^{\tr_f \otimes \id} & \\
		\CO_Y \otimes \CF^{\bullet} \ar[r]^-{\sim} & \CF^{\bullet}
	}
\end{align}
commutes. The upper rectangle commutes because the projection formula is compatible with the unit $\id \to Rj_*j^*$ of adjunction, which we denote by $\ad_j$. More precisely, it follows from the commutativity of the diagram
\[
	\xymatrix@C40pt{
	\overline{f}^!\CO_Y \derotimes L\overline{f}^* \CF^{\bullet} \ar[r] \ar[d] & Rj_*j^*(\overline{f}^!\CO_Y \derotimes L\overline{f}^* \CF^{\bullet}) \ar[d] \\
	(Rj_*j^*\overline{f}^!\CO_Y) \derotimes L\overline{f}^* \CF^{\bullet} \ar[r] \ar[d]^-{\proj} & Rj_*j^*((Rj_*j^*\overline{f}^!\CO_Y) \derotimes L\overline{f}^* \CF^{\bullet}) \ar[d]_-{\sim} \\
	Rj_*(j^*\overline{f}^!\CO_Y \derotimes j^*L\overline{f}^* \CF^{\bullet}) & Rj_*(j^*(Rj_*j^*\overline{f}^!\CO_Y) \derotimes j^*L\overline{f}^* \CF^{\bullet}), \ar[l]_-{\widetilde{\ad}_j}
	}
\]
where the maps of the upper square stem from $\ad_j$ (this square commutes by the naturality of the unit of adjunction), where $\proj$ is the isomorphism from the projection formula and where the lower horizontal arrow is obtained from the counit of adjunction $\widetilde{\ad}_j\colon j^*Rj_* \to \id$. The lower rectangle commutes by construction of $\proj$. The composition
\[
	j^* \xrightarrow{j^* \ad_j} j^*Rj_*j^* \xrightarrow{\widetilde{\ad}_j j^*} j^*
\]
is the identity. Therefore, the composition of the vertical arrows on the right hand side and the lower horizontal arrow equals $Rj_*$ applied to the natural isomorphism 
\[
	j^*(\overline{f}^!\CO_Y \derotimes L\overline{f}^* \CF^{\bullet}) \to j^*\overline{f}^!\CO_Y \derotimes j^*L\overline{f}^* \CF^{\bullet}.
\]
The bottom rectangle of the diagram \autoref{NayakDiagram} commutes by construction of $\chi_{\CF^{\bullet}}^{\overline{f}}$.  
\end{proof}

\bibliographystyle{amsalpha}

\bibliography{Bibliography}

\end{document}